\documentclass[11pt,reqno]{amsart}
\usepackage[margin=1in]{geometry}
\usepackage{amsmath,amssymb,amsthm,graphicx,amsxtra, setspace}
\usepackage[utf8]{inputenc}
\usepackage{mathrsfs}
\usepackage{hyperref}
\usepackage{upgreek}
\usepackage{mathtools}
\usepackage[mathcal]{euscript}
\usepackage{xcolor}
\usepackage{upref}
\usepackage{multirow}
\allowdisplaybreaks

\usepackage[pagewise]{lineno}

\newtheorem{theorem}{Theorem}[section]
\newtheorem{lemma}[theorem]{Lemma}
\newtheorem{proposition}[theorem]{Proposition}

\newtheorem{corollary}[theorem]{Corollary}
\newtheorem{definition}[theorem]{Definition}
\newtheorem{example}[theorem]{Example}
\newtheorem{remark}[theorem]{Remark}

\newtheorem{hypothesis}[theorem]{Hypothesis}

\let\originalleft\left
\let\originalright\right
\renewcommand{\left}{\mathopen{}\mathclose\bgroup\originalleft}
\renewcommand{\right}{\aftergroup\egroup\originalright}

\def\N{\mathbb{N}}

\def\u{\boldsymbol{\xi}}
\def\v{\boldsymbol{\zeta}}
\def\w{\boldsymbol{\Upsilon}}
\def\f{\boldsymbol{\mathfrak{h}}}

\def\h{\varpi}
\def\Vbb{\mathbb{V}}
\def\Lbb{\mathbb{L}}
\def\Hbb{\mathbb{H}}

\def\Ocal{\mathcal{D}}
\def\Pcal{\mathcal{P}}
\def\Acal{\mathcal{A}}
\def\Hcal{\mathcal{H}}
\def\Mrm{\mathrm{M}}
\def\Frm{\mathrm{F}}
\def\Rrm{\boldsymbol{\mathfrak{R}}}
\def\drm{\mathrm{d}}
\def\Urm{\boldsymbol{\mathcal{X}}}
\def\Wrm{\mathcal{W}}
\def\Grm{\boldsymbol{\Phi}}
\def\Lrm{\mathrm{L}}
\def\Qrm{\mathrm{G}}

\def\Pbb{\mathbb{P}}
\def\Ebb{\mathbb{E}}
\def\Arm{\boldsymbol{\mathcal{E}}}

\def\B{\mathcal{B}}
\def\J{\mathcal{J}}
\def\K{\mathcal{K}}
\def\d{\mathrm{d}}
\def\C{\mathrm{C}}
\def\wi{\widetilde}
\def\Tr{\mathrm{Tr}}
\def\X{\mathbb{X}}

\def\W{\mathrm{W}}

\numberwithin{equation}{section}

\newcommand{\R}{\mathbb{R}}

\renewcommand{\d}{\/\mathrm{d}\/}

\newcommand{\Addresses}{{% additional braces for segregating \footnotesize
		\footnote{
			\noindent \textsuperscript{1}Center for Mathematics and Applications (NOVA Math), NOVA School of Science and Technology (NOVA FCT),	Portugal.\par\nopagebreak
			\noindent 
			\textit{e-mail:} \texttt{Kush Kinra: kushkinra@gmail.com, k.kinra@fct.unl.pt.}
			
			\noindent \textsuperscript{*}Corresponding author.
			
			\textit{Key words:} Stochastic continuous data assimilation, stochastic third-grade fluids, additive and multiplicative noise, Foias-Prodi estimates in expected value.
			
			Mathematics Subject Classification (2020): Primary 60H15, 35R60, 76B75; Secondary  93C20, 60H30, 37C50. 
			
			This work is funded by national funds through the FCT - Fundação para a Ciência e a Tecnologia, I.P., under the scope of the projects UID/297/2025 and UID/PRR/297/2025 (Center for Mathematics and Applications - NOVA Math).  K. Kinra wish to thank Prof. Fernanda Cipriano for suggesting this problem.
			
}}}

\begin{document}
		%	\linenumbers
	
	\title[Continuous data assimilation for stochastic third-grade fluid equations]{On Continuous Data Assimilation for a class of 2D and 3D stochastic non-Newtonian fluids of differential type
		\Addresses}
	
	\author[Kush Kinra]
	{Kush Kinra\textsuperscript{1*}}

	% Abstract
	\begin{abstract}
		Continuous data assimilation (CDA) techniques, most notably the nudging approach proposed by Azouani, Olson, and Titi (AOT), have been shown to be very successful in deterministic frameworks for achieving long-time synchronization between an approximate state and true state. In this note, we develop and study a CDA scheme for a class of stochastic non-Newtonian fluids, namely third-grade fluids, subject to either additive or multiplicative Gaussian stochastic forcing in both two- and three-dimensional settings. We establish sufficient criteria on the nudging gain and the observational mesh size that guarantee convergence of the assimilated state toward the underlying stochastic solution. Convergence is proved in the mean-square sense, and, in the case of additive noise, we further obtain almost sure (pathwise) convergence.
	\end{abstract}
	
	\maketitle

%  \tableofcontents

\section{Introduction}

In this note, we examine the continuous data assimilation problem for a class of stochastic non-Newtonian fluid models, specifically the stochastic third-grade fluid equations (TGFEs). It is important to note that, although much of the existing literature is devoted to Newtonian fluids described by the Navier–Stokes equations, many industrial and biological flows do not obey Newton’s law of viscosity and therefore cannot be adequately modeled within this framework. Such fluids often exhibit complex rheological properties, including shear-thinning, shear-thickening, and viscoelastic effects, which classical Newtonian models fail to capture. Consequently, more advanced mathematical descriptions are necessary to accurately represent their dynamics and predict their behavior in realistic environments.

In recent years, TGFEs have been employed in several numerical and simulation studies to better understand the dynamics of nanofluids (see, for example, \cite{PP19, RHK18}). Nanofluids are engineered suspensions of nanoparticles in a base fluid (such as water, oil, or ethylene glycol) and are known to possess enhanced thermal conductivity relative to the base fluid alone. This property makes them particularly important in technological and microelectronic applications. For these reasons, a rigorous mathematical analysis of TGFEs is essential for gaining insight into the behavior of such complex fluids.

\medskip
Let $\Ocal \subset \mathbb{R}^d$, $d=2,3$, be a bounded domain with a smooth boundary $\partial \Ocal$. The velocity field of the fluid is denoted by $\u(x,t) \in \mathbb{R}^d$, where $x \in \Ocal$ and $t \geq 0$, while $p(x,t) \in \mathbb{R}$ represents the corresponding pressure. An external forcing term is given by $\f(x) \in \mathbb{R}^d$. In this note, we study the following stochastic incompressible fluid system:
\begin{equation}\label{1}
	\left\{
	\begin{aligned}
		\drm \u -\nu \Delta\u \drm t + (\u\cdot\nabla)\u \drm t - \alpha \text{div}((\Arm(\u))^2) \drm t  & - \beta\text{div}(|\Arm(\u)|^2\Arm(\u)) \drm t +\nabla p \drm t \\
		&=\f \drm t + \Grm(t,\u)\drm\Wrm, && \text{ in } \ \Ocal\times(0,\infty), \\ \nabla\cdot\u&=0, \ &&\text{ in }  \Ocal\times[0,\infty), \\
		\u&=\mathbf{0},&& \text{ on } \ \partial\Ocal\times[0,\infty), \\
		\u(0)&=\u_0, && \text{ in } \ \Ocal,
	\end{aligned}
	\right.
\end{equation} 
where, $\Arm(\u)=\nabla \u+(\nabla \u)^T$, $\Wrm$ denotes a $\Qrm$-Wiener process, and $\Grm(\cdot,\cdot)$ represents the noise coefficient (see Subsection \ref{Sto-set} for further details). The parameter $\nu>0$ corresponds to the viscosity, while $\alpha,\beta>0$ are material moduli. In the special case $\alpha=\beta=0$, the model reduces to the classical $d$-dimensional stochastic Navier–Stokes equations (SNSEs).

Throughout this note, we assume that the parameters satisfy a subset of physically relevant conditions (see \cite{FR80}), given by:
\begin{align}\label{third-grade-paremeters-res}
	\nu>0,\quad \beta>0,\quad \text{and}\quad |\alpha|<\sqrt{2\nu\beta}.
\end{align}
Under the condition \eqref{third-grade-paremeters-res}, the well-posedness of system \eqref{1} was established in \cite{yas-fer_JNS}, where the authors also proved the existence of an ergodic invariant measure.

Data assimilation seeks to integrate spatially discrete observations into physical or computational models. Its central motivation lies in the intrinsic limitations of deterministic modeling, which often remains incomplete because not all underlying physical mechanisms can be fully represented. In classical continuous data assimilation (CDA), as introduced by Daley \cite{daley1993atmospheric}, observational data are incorporated directly into the model as it evolves in time. A more recent formulation due to Azouani, Olson, and Titi (AOT) \cite{azouani2014continuous} interprets CDA as a feedback control, or nudging, mechanism rooted in control theory \cite{AbderrahimTiti2014}. This approach augments the model with a relaxation term that drives the solution toward the observed data, yielding an assimilated trajectory that converges asymptotically to the true solution. As a result, the AOT algorithm offers an effective framework for state estimation in dynamical systems with limited observational data.

The AOT methodology has been widely applied to numerous deterministic physical models; see, for instance, \cite{BB_RMS_2025,bessaih2022continuous,BP_SIMA_2021,CGJA,FGHMMW,JST,PAM,NKC_2025_Submit} and the references therein. In particular, the deterministic system corresponding to \eqref{1} has been studied in \cite{NKC_2025_Submit}. In contrast, the extension of AOT methodology to stochastic frameworks has received relatively little attention (see, e.g., \cite{BFLZ_2025_Arxiv,bessaih2015continuous,Blomker,KK_CDA_SCBF}). The authors in \cite{BFLZ_2025_Arxiv} have recently extended the deterministic framework to stochastic settings.

 For completeness, let us briefly discuss the general structure of the CDA algorithm. Assume that $\u=\u(t)$ denotes the unknown true state of a dynamical system, which evolves according to
\begin{align*}
	\drm \u(t) = \Frm(\u(t)) \, \drm t + \Grm(t, \u(t)) \, \drm \Wrm(t),
\end{align*}
 with an unknown initial data $\u(0)$. Here $\Wrm$ is a $\Qrm$-Wiener process  and $\Grm(\cdot,\cdot)$ is the noise coefficient. Observational data are assumed to be accessible only at a coarse spatial scale via an interpolant operator $\Rrm_{\h}(\u(t))$, where $\h>0$ represents the observation resolution. The CDA algorithm produces an approximate solution by solving the modified equation
 \begin{align*}
 	\drm \Urm(t) = \left[ \Frm(\Urm(t)) - \kappa \Rrm_{\h}(\Urm(t) - \u(t)) \right]\, \drm t + \Grm (t,\Urm(t)) \, \drm  \Wrm(t).
 \end{align*}
 where   $\kappa>0$ is the nudging parameter. Under appropriate conditions on the parameters $\kappa$ and $\h$, the nudged solution $\Urm(t)$ converges to the true state $\u(t)$ as $t \to +\infty$, independently of the initial conditions $\u(0)$ and $\Urm(0)$. Estimates that measure the discrepancy between $\Urm$ and $\u$ are commonly referred to as Foias-Prodi estimates and typically yield exponential decay in time when $\kappa$ and $\h$ are chosen within suitable ranges. Examples of physically meaningful interpolant operators $\Rrm_{\h}$ may be found in \cite{foias1991determining,jones1992determining, jones1993upper}.
 
 Establishing convergence in the presence of stochastic forcing is more subtle due to the impact of randomness. Such analysis may be carried out either \emph{pathwise}, by studying almost-sure convergence for individual noise realizations, or \emph{in expectation}, by examining mean-square or statistical convergence. In the case of additive noise, the stochastic terms cancel when considering the difference $\u(t)-\Urm(t)$, which makes it possible to prove exponential convergence along almost every trajectory as $t \to +\infty$, irrespective of the initial data $\u(0)$ and $\Urm(0)$. For multiplicative noise, however, no such cancellation occurs, and convergence is generally obtained only in expectation.

In this work, we study the long-time convergence of the nudged solution $\Urm$ associated with system \eqref{1} toward the corresponding solution $\u$ of the same system. Our analysis builds on the framework developed in \cite{BFLZ_2025_Arxiv}, where related techniques were introduced, drawing in particular on methods previously used in the investigation of invariant measures (see, for example, \cite{BZ_DCDS,FZ25}).

Our main results can be summarized as follows. Assuming a linear growth condition on $\Grm$, together with appropriate restrictions on the observational resolution $\h$ and the nudging parameter $\kappa$, we establish that
 \begin{enumerate}
 	\item [(i).] for additive as well as multiplicative noise (see Theorem \ref{MT-Critical})
 	\begin{align*}
 		 \mathbb{E} \left[\|\u(t) - \Urm(t)\|_2^2 \right]\;\to\; 0
 		\quad \text{exponentially fast as } t \to +\infty.
 	\end{align*} 
 \vskip 2mm
 \item  [(ii).]  for additive noise (see Theorem \ref{pathwise_data_ass})
 \begin{align*}
 	  \|\u(t) - \Urm(t)\|_2^2 \;\to\; 0
 	\quad \text{exponentially fast as } t \to +\infty, \;\;\Pbb\text{-a.s.}
 \end{align*} 
 \end{enumerate}

The remainder of the paper is organized as follows. In Section~\ref{sec:setting}, we discuss the necessary preliminaries and well-posedness results. Section~\ref{Sec3} is dedicated to establishing energy estimates for solutions of system \eqref{STGF} and its associated data assimilation model, both in probability and in expectation. The analysis of the continuous data assimilation (CDA) algorithm for system \eqref{1} is carried out in Section~\ref{Sec4} (see Theorem~\ref{MT-Critical}). Finally, the last section provides a pathwise analysis of the CDA algorithm in the presence of additive noise (see Theorems~\ref{pathwise_data_ass}).

%%%%%%%%%%%%%%%%%%%%%%%%%%%%%%%
\section{Mathematical setting}\label{sec:setting}
In this section, we collect the notation, functional spaces, operators, and hypotheses that will be employed in the subsequent analysis. In addition, we discuss the solvability results for the underlying system and associated data assimilated system.

Let $\mathcal{E}$ be a Banach space, and denote its topological dual by $\mathcal{E}^*$. The duality pairing between $\mathcal{E}$ and $\mathcal{E}^*$ is written as $\langle \cdot , \cdot \rangle_{\mathcal{E},\mathcal{E}^*}$, with the subscripts omitted whenever the meaning is clear. For a real Hilbert space $\Hcal$, the associated norm and inner product are denoted by $\|\cdot\|_{\Hcal}$ and $(\cdot,\cdot)_{\Hcal}$, respectively. %The standard norms on the classical Lebesgue and Sobolev spaces $\mathrm{L}^p(\Ocal)$ (resp. $\mathbb{L}^p(\Ocal)$) and $\mathrm{W}^{m,p}(\Ocal)$ (resp. $\mathbb{W}^{m,p}(\Ocal)$) are denoted by $\|\cdot\|_p$ and $\|\cdot\|_{\mathrm{W}^{m,p}}$ (resp. $\|\cdot\|_{\mathbb{W}^{m,p}}$), respectively.

\subsection{Functional spaces}
 
Let $\C_0^{\infty}(\Ocal;\R^d)$ denote the space of all infinitely differentiable, $\R^d$-valued functions with compact support in $\Ocal\subset\R^d$. We define
\begin{align*}
	\mathcal{V} &:= \{\u \in \C_0^{\infty}(\Ocal;\R^d) : \nabla \cdot \u = 0\},  &&	\Hbb :=  \overline{\mathcal{V}}^{\Lbb^2(\Ocal) = \mathrm{L}^2(\Ocal;\R^d)},\\
	\Vbb &:= \overline{\mathcal{V}}^{\Hbb^1(\Ocal) = \mathrm{H}^1(\Ocal;\R^d)},
	&& \widetilde{\Lbb}^{p} := \overline{\mathcal{V}}^{\Lbb^p(\Ocal) = \mathrm{L}^p(\Ocal;\R^d)}, \quad p \in (1,\infty), \quad p\neq 2.
\end{align*}

The space $\Hbb$ is endowed with the Hilbert space structure inherited from $\Lbb^2(\Ocal)$, with the corresponding norm and inner product denoted by $\|\cdot\|_2$ and $(\cdot,\cdot)$, respectively. By virtue of the Poincar\'e inequality on the bounded domain $\Ocal$, the space $\Vbb$ may be equipped with the norm $\|\u\|_{\Vbb} := \|\nabla \u\|_2$ for $\u \in \Vbb$. For $p \in (1,\infty)$ with $p \neq 2$, the space $\wi\Lbb^p$ is endowed with the Banach space structure induced by $\Lbb^p(\Ocal)$, and its norm is denoted by $\|\cdot\|_p$. The embeddings $\Vbb \hookrightarrow \Hbb \cong \Hbb^* \hookrightarrow \Vbb^*$ are continuous, and the embedding $\Vbb \hookrightarrow \Hbb$ is compact. Further details on the functional setting can be found in \cite{Temam_1984}.

Let	$ \X:=\mathbb{W}_0^{1,4}(\Ocal)\cap \Vbb$ with norm $\Vert	\cdot\Vert_{\X} :=\Vert	\cdot\Vert_{\mathbb{W}^{1,4}}+\|\cdot\|_{\Vbb}.$
Indeed,	let us	recall	that	$\mathbb{W}_0^{1,4}(\Ocal)$	endowed	with $\Vert	\cdot\Vert_{\mathbb{W}^{1,4}}$-norm	is a Banach	space,	where		$$\Vert	\w \Vert_{\mathbb{W}^{1,4}}^4=\int_{\Ocal}	\vert	\w(z)\vert^4 \drm z+\int_{\Ocal} \vert	\nabla	\w(z)\vert^4 \drm z.$$

\subsection{The Helmholtz-Hodge projection}
Let $\mathcal{P}_p : \Lbb^2(\Ocal)\cap \Lbb^p(\Ocal) \to \Hbb\cap\widetilde{\Lbb}^p$ be a bounded linear projection (\cite{Farwig+Kozono+Sohr_2007}) such that, for $2\leq p<\infty$, the adjoint map  $(\mathcal{P}_p)^{*}=\mathcal{P}_{p'}$, where $\frac{1}{p}+\frac{1}{p'}=1$ and 
\begin{align*}
	\mathcal{P}_{p'} : \Lbb^2(\Ocal)+ \Lbb^{p'}(\Ocal) \to \Hbb+\widetilde{\Lbb}^{p'}.
\end{align*}
		\begin{lemma}[{\cite{Kunstmann_2010}}]
	The projection $\mathcal{P}_{\frac43}$ has a linear and continuous extension $\widetilde{\mathcal{P}}_{\frac43}: \mathbb{H}^{-1}(\Ocal) + \mathbb{W}^{-1,\frac{4}{3}}(\Ocal) \to \X^*$. 
%	given by the restriction
%	\begin{align}
%		\widetilde{\mathcal{P}}_{\frac{4}{3}} \Xi  = \Xi |_{\X}, \;\; \text{ for } \; \; \Xi \in \X^*.
%	\end{align}
\end{lemma}

\begin{remark}
We will henceforth denote $\widetilde{\mathcal{P}}_{\frac43}$ by $\mathcal{P}$.
\end{remark}

\subsection{Linear operator}\label{opeA}
Let us  introduce  the linear operator defined by 
\begin{equation*}
	\Acal\u:=-\mathcal{P}\Delta\u,\ \u\in\X.
\end{equation*}
Remember that the operator $\Acal$ is a non-negative operator in $\Hbb$ and 
\begin{align*}%\label{2.7a}
	\left<\u, \Acal \u\right>=\|\nabla\u\|_{2}^2,\ \textrm{ for all }\ \u\in\X, \ \text{ so that }\ \|\Acal\u\|_{\X^{*}} \leq \|\u\|_{\X}.
\end{align*}

\begin{remark}
	For the bounded domain $\Ocal$,  we have 
	\begin{align}\label{poin}
		\lambda_1\|\u\|_{2}^2 \leq 	\|\u\|_{\Vbb}^2,
	\end{align}
where $\lambda_1$ is the first eigenvalue of the Stokes operator (\cite{KK_CDA_SCBF}).
\end{remark}

%\begin{remark}
%	The Stokes operator $\Abb\u:= -\mathcal{P}_{2}\Delta\u$ is a linear operator in $\Hbb$ with domain $\Drm(\Abb)=:\Hbb^2(\Ocal)\cap\Vbb$. It can be extended (and let us not change the notation) as a linear operator $\Abb:\Vbb\to \Vbb^*$ by
%	\[
%	\langle \u , \Abb\v \rangle:=
%	(\nabla \u, \nabla\v)\,,  \text{ for all }  \u,\v\in \Vbb.
%	\]
%	For the bounded domain $\Ocal$, the operator $\Abb$ is invertible and its inverse $\Abb^{-1}$ is bounded, self-adjoint and compact in $\Hbb$. Thus, using spectral theorem, the spectrum of $\Abb$ consists of an infinite sequence $0< \lambda_1\leq \lambda_2\leq\ldots\leq \lambda_k\leq \ldots,$ with $\lambda_k\to\infty$ as $k\to\infty$ of eigenvalues. Moreover, there exists an orthonormal basis $\{e_k\}_{k=1}^{\infty} $ of $\Hbb$ consisting of eigenfunctions of $\Abb$ such that $\Abb e_k =\lambda_ke_k$,  for all $ k\in\mathbb{N}$. In addition, we have 
%		\begin{align}\label{poin}
%	\lambda_1\|\u\|_{2}^2 \leq 	\|\u\|_{\Vbb}^2.
%	\end{align}
%\end{remark}

\subsection{Bilinear operator}
We define the \emph{trilinear form} $b(\cdot,\cdot,\cdot):\X\times\X\times\X\to\R$ by $$b(\u,\v,\w)=\int_{\Ocal}(\u(z)\cdot\nabla)\v(z)\cdot\w(z)\drm z = \sum_{i,j=1}^d\int_{\Ocal}\u_i(z)\frac{\partial \v_j(z)}{\partial z_i}\w_j(z)\drm z.$$ If $\u, \v$ are such that the linear map $b(\u, \v, \cdot) $ is continuous on $\X$, the corresponding element of $\X^*$ is denoted by $\B(\u, \v)$. We also denote  $\B(\u, \u)= \mathcal{P} [(\u\cdot\nabla)\u]$.
An integration by parts yields  
\begin{equation}\label{b0}
	\left\{
	\begin{aligned}
		b(\u,\v,\v) &= 0, && \text{ for all }\ \u,\v \in\X,\\
		b(\u,\v,\w) &=  -b(\u,\w,\v), && \text{ for all }\ \u,\v,\w\in \X.
	\end{aligned}
	\right.\end{equation}

    \subsection{Nonlinear operators} 
We introduce two  nonlinear operators given by
\begin{equation*}
	\J(\u):= - \Pcal[\text{div}((\Arm(\u))^2)],\ \u\in\X,
\end{equation*}
and
\begin{equation*}
	\K(\u):= - \Pcal[\text{div}(|\Arm(\u)|^2\Arm(\u))] ,\ \u\in\X.
\end{equation*}
It can be easily seen that $\langle\u, \K (\u)\rangle = \frac{1}{2} \|\Arm(\u)\|_{4}^{4}$. 

	\begin{remark}
	1. From \cite[Equation (2.20)]{Hamza+Paicu_2007}, we have 
	\begin{align}\label{J1-J2}
		& |\alpha\left< \v_1 - \v_2,	\J(\v_1) - \J(\v_2) \right>| 
		\leq  \frac{|\alpha|}{2}\int_{\Ocal} |\Arm(\v_1-\v_2)|^2( |\Arm(\v_1)|+|\Arm(\v_2)|) \d x,
	\end{align}
	
	2. From \cite[Equation (2.13)]{Hamza+Paicu_2007}, we have 
	\begin{align}\label{K1-K2}
		&  \beta\left<\v_1 - \v_2,	\K(\v_1) - \K(\v_2) \right>
		 = \frac{\beta}{2}\int_{\Ocal}( |\Arm(\v_1)|^2-|\Arm(\v_2)|^2)^2 \d x + \frac{\beta}{2}\int_{\Ocal} |\Arm(\v_1-\v_2)|^2( |\Arm(\v_1)|^2+|\Arm(\v_2)|^2) \d x.
	\end{align}
\end{remark}

\begin{lemma}
In this lemma, we collect some inequalities that will be used later on:
	\begin{itemize}
		\item  We can find a positive constant $C_{S,d}$ (\cite[Subsection 2.4]{Kesavan_1989}) such that 
		\begin{align}\label{Sobolev-embedding3}
			\|\w\|_{\infty} \leq C_{S,d} \|\nabla\w\|_{4}, \quad \text{	for all } \;	\w\in	\mathbb{W}_0^{1,4}(\Ocal).
		\end{align}
		
		\item  We can find a positive constant $C_{K,d}$ (see \cite[Theorem 2 (ii)]{DiFratta+Solombrino_Arxiv}, a Korn-type inequality) such that 
		\begin{align}\label{Korn-ineq}
			\Vert	\nabla \w \Vert_{4}	\leq	C_{K,d}\Vert	\Arm(\w)	\Vert_4,\quad \text{	for all } \;	\w\in	\mathbb{W}_0^{1,4}(\Ocal).
		\end{align}		
		\item In view of \eqref{Sobolev-embedding3} and \eqref{Korn-ineq}, we can find a positive constant $\mathrm{N}_d$ such that
		\begin{align}\label{Sobolev+Korn}
			\|\w\|_{\infty} \leq \mathrm{N}_d \|\Arm(\w)\|_{4}, \quad \text{	for all } \;	\w\in	\mathbb{W}_0^{1,4}(\Ocal).
		\end{align}
	\end{itemize}
	
\end{lemma}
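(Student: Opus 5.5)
The lemma collects three inequalities; the first two are cited and the third is deduced from them. I would verify each in turn.

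For \eqref{Sobolev-embedding3}, note that $4>d$ for $d=2,3$. Morrey's inequality then gives $\mathbb{W}^{1,4}_0(\Ocal)\hookrightarrow \C^{0,1-d/4}(\overline{\Ocal})$, so $\|\w\|_\infty\le C\|\w\|_{\mathbb{W}^{1,4}}$. Since $\w$ vanishes on $\partial\Ocal$ and $\Ocal$ is bounded, Poincar\'e's inequality in $\Lbb^4$ gives $\|\w\|_4\le C_P\|\nabla\w\|_4$. Hence the full $\mathbb{W}^{1,4}$ norm is controlled by $\|\nabla\w\|_4$, which is the first claim with $C_{S,d}$ depending on $d$ and $\Ocal$, as in \cite{Kesavan_1989}.

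For the Korn-type inequality \eqref{Korn-ineq}, I would first argue for smooth compactly supported $\w$, with $\Arm(\w)=\nabla\w+(\nabla\w)^T$. One has the identity $\partial_j\partial_k \w_i = \tfrac12\big(\partial_j \Arm_{ik}+\partial_k\Arm_{ij}-\partial_i\Arm_{jk}\big)$. Consequently $\Delta \w_i = \sum_j \partial_j\Arm_{ij}-\tfrac12\partial_i \operatorname{tr}\Arm$, and $\nabla \w=\nabla\Delta^{-1}\operatorname{div}(\cdots)$ is a Calder\'on--Zygmund operator applied to $\Arm(\w)$ (extending by zero to $\R^d$). $\Lbb^p$ boundedness of Riesz transforms for $p=4$ then gives $\|\nabla\w\|_4\le C\|\Arm(\w)\|_4$. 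This is the zero-trace case, so no rigid-motion quotient is needed. Density of $\C_0^\infty$ in $\mathbb{W}^{1,4}_0$ extends it, and this is exactly \cite[Theorem 2 (ii)]{DiFratta+Solombrino_Arxiv}. The only delicate point here is the singular-integral step, which we take from the reference.

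Finally, \eqref{Sobolev+Korn} follows by chaining the two estimates:
\[
\|\w\|_\infty\le C_{S,d}\|\nabla\w\|_4\le C_{S,d}C_{K,d}\|\Arm(\w)\|_4 ,
\]
so $\mathrm{N}_d:=C_{S,d}C_{K,d}$ works. I expect no real obstacle beyond checking that the exponent $4$ exceeds the dimension, which is what makes the $\Lbb^\infty$ embedding available for both $d=2$ and $d=3$.
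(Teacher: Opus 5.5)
Your proposal is correct and follows the paper's argument: the paper cites the Sobolev embedding \eqref{Sobolev-embedding3} and the zero-trace Korn inequality \eqref{Korn-ineq} and chains them, which gives $\mathrm{N}_d = C_{S,d}C_{K,d}$ exactly as you do. Your Morrey--Poincar\'e and Riesz-transform sketches are accurate, standard justifications of the two cited facts, and you are right that $C_{S,d}$ also depends on $\Ocal$ (through $|\Ocal|$).
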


Throughout the article,  we denote by $C$   generic constant, which may vary from line to line.

\subsection{Stochastic setting}\label{Sto-set}
Let $(\Omega,\mathscr{F},\mathbb{P})$ be a complete probability space endowed with an increasing filtration $\{\mathscr{F}_t\}_{t\geq 0}$ of sub-$\sigma$-algebras of $\mathscr{F}$ satisfying the usual conditions.

\iffalse 
\subsubsection{$\Qrm$-Wiener process} 
We begin by recalling the definition and some properties of $\Qrm$-Wiener processes. Let $\Hcal$ be a separable Hilbert space.

\begin{definition}
	A stochastic process $\{\Wrm(t)\}_{t\geq 0}$ is called an \emph{$\Hcal$-valued, $\mathscr{F}_t$-adapted $\Qrm$-Wiener process} with covariance operator $\Qrm$ if:
	\begin{enumerate}
		\item[(i)] for every non-zero $\v\in \Hcal$, the process $\|\Qrm^{1/2}\v\|_{\Hcal}^{-1}(\Wrm(t),\v)$ is a standard one-dimensional Wiener process;
		\item[(ii)] for any $\v\in \Hcal$, the real-valued process $(\Wrm(\cdot),\v)$ is an $\mathscr{F}_t$-adapted martingale.
	\end{enumerate}
\end{definition}

The process $\{\Wrm(t)\}_{t\ge 0}$ is a $\Qrm$-Wiener process if and only if, for each $t$, one can represent the random field $\Wrm(\cdot)$ as 
$$\Wrm(\cdot,x) = \sum_{k=1}^{\infty} \sqrt{\mu_k}\,\boldsymbol{q}_k(x)\beta_k(\cdot),$$
where $\{\beta_k(\cdot)\}_{k\in\mathbb{N}}$ are independent one-dimensional Brownian motions on $(\Omega,\mathscr{F},\mathbb{P})$, and $\{\boldsymbol{q}_k\}_{k=1}^{\infty}$ is an orthonormal basis of $\Hcal$ satisfying $\Qrm\boldsymbol{q}_k = \mu_k \boldsymbol{q}_k$. If, in addition, $\Tr\Qrm = \sum_{k=1}^{\infty}\mu_k < \infty$, then $\Wrm(\cdot)$ is a Gaussian process in $\Hcal$ with 
$$\Ebb[\Wrm(t)] = 0, \qquad \mathrm{Cov}[\Wrm(t)] = t\Qrm,\qquad t\ge 0.$$
\fi

Assume that $\Hcal$ is a separable Hilbert space. Let $\{\Wrm(t)\}_{t\geq 0}$ be an \emph{$\Hcal$-valued, $\mathscr{F}_t$-adapted $\Qrm$-Wiener process} with covariance operator $\Qrm$. Define $\Hcal_0 := \Qrm^{1/2}\Hcal$, which becomes a Hilbert space when equipped with the inner product
$$(\u,\v)_0 = %\sum_{k=1}^{\infty}\frac{1}{\mu_k}(\u,\boldsymbol{q}_k)(\v,\boldsymbol{q}_k) = 
(\Qrm^{-1/2}\u,\Qrm^{-1/2}\v), \qquad \u,\v\in \Hcal_0,$$
where $\Qrm^{-1/2}$ denotes the pseudo-inverse of $\Qrm^{1/2}$.

Assume that $\mathcal{L}(\Hcal)$ is the space of bounded linear operators on $\Hcal$, and let $\mathcal{L}_{\Qrm}:=\mathcal{L}_{\Qrm}(\Hcal)$ denote the space of Hilbert-Schmidt operators from $\Hcal_0$ into $\Hcal$. Since $\Qrm$ is trace-class, the embedding $\Hcal_0 \hookrightarrow \Hcal$ is Hilbert–Schmidt, and $\mathcal{L}_{\Qrm}$ is itself a Hilbert space with 
%norm
%$$\|\Phi\|_{\mathcal{L}_{\Qrm}}^2 = \Tr(\Phi\Qrm\Phi^*),
%= \sum_{k=1}^{\infty} \|\Qrm^{1/2}\Phi^*\boldsymbol{q}_k\|_{\Hcal}^2,
%$$
%and 
inner product
$$(\Phi,\Psi)_{\mathcal{L}_{\Qrm}} 
= \Tr(\Phi\Qrm\Psi^*).
%= \sum_{k=1}^{\infty} \bigl(\Qrm^{1/2}\Psi^*\boldsymbol{q}_k,\Qrm^{1/2}\Phi^*\boldsymbol{q}_k\bigr).
$$
Further background can be found in \cite{DaZ}.

We now state the assumptions imposed on the noise coefficient $\Grm$.

\begin{hypothesis}\label{hyp-noise}
	Assume that $\{\Wrm(t)\}_{t\ge 0}$ is an $\Hbb$-valued $\Qrm$-Wiener process defined on the stochastic basis $(\Omega,\mathscr{F},\{\mathscr{F}_t\}_{t\geq 0},\mathbb{P})$. The coefficient $\Grm(\cdot,\cdot)$ satisfies:
	\begin{itemize}
		\item[(H.1)] $\Grm \in \C([0,T]\times \X;\,\mathcal{L}_{\Qrm}(\Hbb))$;
		
		\item[(H.2)] \textbf{(Linear growth condition)}  
		We can find two constants $K>0$ and $\widetilde{K} \geq 0$ such that for all $t\in [0,T]$ and $\u\in\Hbb$,
\begin{align*}
	\|\Grm(t,\u)\|_{\mathcal{L}_{\Qrm}}^2 	\leq K+\widetilde{K}\|\u\|_{2}^2.
\end{align*}
		
		\item[(H.3)] \textbf{(Lipschitz condition)}  
		We can find a constant $L\geq 0$ such that for any $t\in[0,T]$ and $\u_1,\u_2\in\Hbb$,
\begin{align*}
		\|\Grm(t,\u_1) - \Grm(t,\u_2)\|_{\mathcal{L}_{\Qrm}}^2 \leq L\,\|\u_1 - \u_2\|_{2}^2.
\end{align*}
	\end{itemize}
\end{hypothesis}

\subsection{Interpolant operators}\label{sec-inter}
When the initial velocity $\u_0$ is unknown, the data assimilation framework for system \eqref{1} employs a linear interpolant operator $\Rrm_{\h} : \mathbb{H}^1(\Ocal) \to \mathbb{L}^2(\Ocal)$ that approximates a given function using observations at a spatial resolution characterized by the length scale $\h>0$, and which satisfies the following properties:
 \begin{equation}\label{eqn-data-inter}
\left\| \u -  \Rrm_{\h}(\u) \right\|_{2}^2 \leq c_0 \h^2 \left\|\u \right\|_{\mathbb{H}^1}^2,
\end{equation}
 for every $\u \in \mathbb{H}^1(\Ocal).$ 
\begin{example}
	1). An example of an interpolant which is physically relevant and satisfies \eqref{eqn-data-inter} is given by finite volume elements  (see \cite{azouani2014continuous} and the therein references). 
	Specifically, let $\h > 0$ be given, and let 
	\[
	\Ocal = \bigcup_{j=1}^{n_{\h}} \Ocal_j,
	\]
	where the $\Ocal_j$ are disjoint subsets such that $\operatorname{diam}(\Ocal_j) \leq \h$ for 
	$j = 1, 2, \ldots, n_{\h}$. Then we set
	\[
	\Rrm_{\h}(\u)(x) = \sum_{j=1}^{N_{\h}} \overline{\u}_j\,\pmb{1}_{\Ocal_j}(x), \text{ where } \; \overline{\u}_j=\frac{1}{|\Ocal_j|}\int_{\Ocal_j} \u (x) \drm x.
	\]
	
	2). The orthogonal projection onto the Fourier modes, with wave numbers $k$ such that $|k| \leq \frac{1}{\h},$ is an another example of an interpolant operator which satisfies  the approximation property \eqref{eqn-data-inter}.
\end{example}

\subsection{Third-grade fluid equations and data assimilated equation}
In this section, we address the well-posedness of system \eqref{1} and introduce the corresponding data assimilation model, together with a discussion of its solvability. Applying the projection $\Pcal$ to system \eqref{1}, we obtain
\begin{equation}\label{STGF}
	\left\{
	\begin{aligned}
		\drm \u + \nu \Acal\u \drm t + \B(\u,\u) \drm t + \alpha \J(\u) \drm t + \beta\K(\u) \drm t  & =  \f \drm t + \Grm(t,\u)\drm\Wrm, &&t>0, \\ 
		\u(0)&=\u_0. && 
	\end{aligned}
	\right.
\end{equation} 
We recall the solvability of system \eqref{STGF} as established in \cite{yas-fer_JNS}.
\begin{definition}\label{def-StrongSolution}
	We say that the stochastic system \eqref{STGF} admits a \emph{strong solution} (in the probabilistic sense) if, for every stochastic basis $(\Omega,\mathscr{F},\{\mathscr{F}_t\}_{t\geq 0},\Pbb)$	and every $\Qrm$-Wiener process $\{\Wrm(t)\}_{t\geq 0}$ defined on this basis, there exists a progressively measurable process $\u : [0,+\infty)\times\Omega \to \Hbb$ such that, $\Pbb$-a.s.,
\begin{align*}
		\u(\cdot,\omega)\in \C([0,+\infty);\Hbb)\cap \Lrm_{\mathrm{loc}}^2(0,+\infty;\Vbb)\cap \Lrm_{\mathrm{loc}}^{4}(0,+\infty;\mathbb{W}_0^{1,4}(\Ocal)),
\end{align*}
	and for all $t\in[0,T]$ and all $\v\in\X$, the following identity holds $\Pbb$-a.s.:
	\begin{align*}
	& (\u(t),\v) 
	  \\ &  =(\u_0,\v)-\int_0^t\langle\v , \; \nu \Acal\u(s)+\B(\u(s),\u(s)) + \alpha\J(\u(s))  +\beta\K(\u(s)) - \f\rangle\d s 
	  + \int_0^t\left(\v , \; \Grm(s,\u(s))\d\W(s)\right).
\end{align*}
\end{definition}

 \begin{theorem}[{\cite{yas-fer_JNS}}]\label{thm-StrongSolution}
	Assume that condition \eqref{third-grade-paremeters-res} and Hypothesis \ref{hyp-noise} are satisfied. Also, let $\f \in \Vbb^*$ and $\u_0 \in \Hbb$. 
	Then, the system \eqref{STGF} admits a unique strong solution $\u$ in the sense of Definition \ref{def-StrongSolution}.
\end{theorem}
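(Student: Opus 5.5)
The plan is the classical route for stochastic PDEs with locally monotone operators: a Faedo--Galerkin approximation, uniform energy estimates, and identification of the nonlinear limit by a Minty-type argument with an exponential weight. Pathwise uniqueness follows from the same local monotonicity inequality. The key structural input is a \emph{local monotonicity} estimate for
\[
\mathcal{F}(\u):=\nu\Acal\u+\B(\u,\u)+\alpha\J(\u)+\beta\K(\u).
\]
For $\w=\v_1-\v_2$, I would combine \eqref{J1-J2} and \eqref{K1-K2} with Young's inequality in the form
\[
\tfrac{|\alpha|}{2}|\Arm(\w)|^2|\Arm(\v_i)|\le \tfrac{\beta}{4}|\Arm(\w)|^2|\Arm(\v_i)|^2+\tfrac{\alpha^2}{4\beta}|\Arm(\w)|^2 .
\]
Since $\|\Arm(\w)\|_2^2=2\|\nabla\w\|_2^2$ for divergence-free $\w$ vanishing on $\partial\Ocal$, the $\J$-contribution is absorbed by $\frac{\alpha^2}{\beta}\|\nabla \w\|_2^2$ plus half of the positive $\K$-term. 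By \eqref{third-grade-paremeters-res}, $\frac{\alpha^2}{\beta}<2\nu$, so a strictly positive fraction $\delta\|\nabla\w\|_2^2$ of the viscous term survives; this is exactly where the restriction $|\alpha|<\sqrt{2\nu\beta}$ is used.

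For the convective term, \eqref{b0} gives
\[
\langle \B(\v_1)-\B(\v_2),\w\rangle=b(\w,\v_2,\w)=-b(\w,\w,\v_2).
\]
Combining this with \eqref{Sobolev+Korn} yields
\[
|b(\w,\w,\v_2)|\le \|\w\|_2\|\nabla\w\|_2\,\mathrm{N}_d\|\Arm(\v_2)\|_4\le \tfrac{\delta}{2}\|\nabla\w\|_2^2+C\|\Arm(\v_2)\|_4^2\|\w\|_2^2 .
\]
Together with (H.3), the resulting bound is
\[
-2\langle \mathcal{F}(\v_1)-\mathcal{F}(\v_2),\w\rangle+\|\Grm(\v_1)-\Grm(\v_2)\|_{\mathcal{L}_\Qrm}^2\le \big(L+\rho(\v_2)\big)\|\w\|_2^2,\qquad \rho(\v)=C\|\Arm(\v)\|_4^2 .
\]
The weight $\rho$ is integrable in time because solutions lie in $\Lrm^4_{\mathrm{loc}}(\mathbb{W}^{1,4}_0)$. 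I would also record the coercivity bound $\langle\mathcal{F}(\u),\u\rangle\ge \delta'\|\nabla\u\|_2^2+c\|\Arm(\u)\|_4^4$. It follows because $\langle\u,\J(\u)\rangle$ is a cubic term dominated by the $\K$ and viscous terms via the same Young argument, and $b(\u,\u,\u)=0$. Hemicontinuity of $\mathcal{F}$ and the growth bound $\|\mathcal{F}(\u)\|_{\X^*}\le C(\|\u\|_\X+\|\u\|_\X^3)$ are routine.

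Next I would take a Galerkin basis of $\X$, for instance eigenfunctions of a higher-order elliptic operator, so that they are dense in $\X$. I would solve the finite-dimensional SDEs, which have locally Lipschitz coefficients, and apply It\^o's formula to $\|\u_n\|_2^2$. Using coercivity, (H.2), $\f\in\Vbb^*$ and the Burkholder--Davis--Gundy inequality gives uniform bounds for $\u_n$ in $\Lrm^2(\Omega;\Lrm^\infty(0,T;\Hbb))$, $\Lrm^2(\Omega\times(0,T);\Vbb)$ and $\Lrm^4(\Omega\times(0,T);\mathbb{W}^{1,4}_0)$. They also give a uniform bound for $\mathcal{F}(\u_n)$ in $\Lrm^{4/3}(\Omega\times(0,T);\X^*)$ and for $\Grm(\u_n)$ in $\Lrm^2(\Omega\times(0,T);\mathcal{L}_\Qrm)$. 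Extracting weak limits $\u$, $\chi$ and $Z$ and passing to the limit in the equation gives $\drm\u+\chi\,\drm t=\f\,\drm t+Z\,\drm\Wrm$. The a.s. continuity of $\u$ in $\Hbb$ comes from the It\^o formula in the Gelfand triple $\X\subset\Hbb\subset\X^*$.

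The main obstacle is identifying $\chi=\mathcal{F}(\u)$ and $Z=\Grm(\u)$. Plain monotonicity fails here because $\rho$ is unbounded and random. I would use the Krylov--Rozovskii/Liu--R\"ockner weighted trick: apply It\^o's formula to $e^{-\int_0^t(L+\rho(\v))\,\drm s}\|\u_n(t)\|_2^2$ for arbitrary $\v\in\Lrm^4(\Omega\times(0,T);\X)$. Combining this with the local monotonicity inequality and weak lower semicontinuity of the norm yields
\[
\Ebb\int_0^T e^{-\int_0^t(L+\rho(\v))}\Big(2\langle\mathcal{F}(\v)-\chi,\v-\u\rangle-\ldots\Big)\,\drm t\le 0 .
\]
Taking $\v=\u$ gives $Z=\Grm(\u)$. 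Taking $\v=\u-\epsilon\phi$ and letting $\epsilon\to0$ with hemicontinuity gives $\chi=\mathcal{F}(\u)$. One technical point needs care: $\rho(\v)$ must be taken with respect to $\v$, which appears as the second argument in the monotonicity bound, and $\v$ must be localized by stopping times so that the exponential weights are bounded.

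Finally, for uniqueness I would apply It\^o's formula to $e^{-\int_0^t(L+\rho(\u_2))\,\drm s}\|\u_1(t)-\u_2(t)\|_2^2$ for two solutions with the same data. The local monotonicity bound makes the drift nonpositive, so the expectation of this weighted quantity vanishes and $\u_1=\u_2$ a.s. Pathwise uniqueness combined with existence then gives the strong solution on every given stochastic basis, for instance through a Yamada--Watanabe/Gy\"ongy--Krylov argument if the construction is done via martingale solutions. Existence is obtained on each $[0,T]$ and extended to $[0,\infty)$ by uniqueness.
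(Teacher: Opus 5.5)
One computation in your local monotonicity step is off by a factor of $2$. In $\langle \mathcal{F}(\v_1)-\mathcal{F}(\v_2),\w\rangle$ the viscous contribution is $\nu\|\nabla\w\|_2^2=\frac{\nu}{2}\|\Arm(\w)\|_2^2$, not $2\nu\|\nabla\w\|_2^2$. Your Young split holds half of the $\K$-term in reserve and makes the $\J$-difference cost $\frac{\alpha^2}{\beta}\|\nabla\w\|_2^2$. So, as written, you need $\alpha^2<\nu\beta$, not the hypothesis $\alpha^2<2\nu\beta$ of \eqref{third-grade-paremeters-res}.

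To reach the full range you must spend essentially all of the positive term $\frac{\beta}{2}\int_{\Ocal}|\Arm(\w)|^2(|\Arm(\v_1)|^2+|\Arm(\v_2)|^2)\,\drm x$ from \eqref{K1-K2}, e.g.
\[
\tfrac{|\alpha|}{2}|\Arm(\w)|^2|\Arm(\v_i)|\le \tfrac{\beta(1-\epsilon)}{2}|\Arm(\w)|^2|\Arm(\v_i)|^2+\tfrac{\alpha^2}{8\beta(1-\epsilon)}|\Arm(\w)|^2 .
\]
This leaves $\bigl(\nu-\frac{\alpha^2}{2\beta(1-\epsilon)}\bigr)\|\nabla\w\|_2^2>0$ for small $\epsilon>0$, which is exactly the $\varepsilon_0$-splitting in \eqref{Diff-2}. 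The threshold is sharp for this pointwise absorption, since $\frac{\nu}{4}+\frac{\beta}{2}x^2-\frac{|\alpha|}{2}x>0$ for all $x\ge 0$ iff $\alpha^2<2\nu\beta$. Your convective bound only uses the surviving viscous part, so nothing downstream changes. Your coercivity claim is fine as stated: there a single cubic term appears, and the half-split gives exactly $\alpha^2<2\nu\beta$.

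With that repair the argument is correct, but it is not the route behind the statement. The paper gives no proof and defers to \cite{yas-fer_JNS}. There, a martingale solution is obtained by stochastic compactness combined with a monotonicity argument for the nonlinear limits. Pathwise uniqueness then follows from a stability estimate, and a Yamada--Watanabe-type argument yields the probabilistically strong solution.

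You instead run the variational Krylov--Rozovskii/Liu--R\"ockner scheme directly on the given stochastic basis. Tightness, Skorokhod representation and the Yamada--Watanabe step therefore all disappear; the latter, which you mention at the end, is superfluous for your construction. In fact, you only need to check, in the Gelfand triple $\X\subset\Hbb\subset\X^*$, local monotonicity with $\rho(\v)=C\|\Arm(\v)\|_4^2\le C(1+\|\v\|_{\X}^4)$, coercivity, and $\|\mathcal{F}(\v)\|_{\X^*}^{4/3}\le C(1+\|\v\|_{\X}^4)$. You may then simply quote the Liu--R\"ockner theorem for locally monotone SPDEs, with uniqueness coming from the same inequality. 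The weights $e^{-\int_0^t(L+\rho(\v))\,\drm s}$ are automatically bounded by $1$, so stopping times are needed only for the local martingales.

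The price is that your scheme relies on the Lipschitz bound (H.3) and on a weight depending on one argument only, both available here. The compactness route produces martingale solutions under mere continuity of $\Grm$ and is more robust to such structural changes.
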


Let $\u$ be the solution of system \eqref{STGF}, and define $\Urm$ as the solution of the following equation, referred to as the data assimilation system:
\begin{equation}\label{STGF-CDA}
	\left\{
	\begin{aligned}
		\drm \Urm + \nu \Acal\Urm \drm t + \B(\Urm,\Urm) \drm t + \alpha \J(\Urm) \drm t + \beta\K(\Urm) \drm t  & =  \f \drm t  - \kappa \Pcal \Rrm_{\h}\left(\Urm-\u\right)  \drm t + \Grm(t,\Urm)\drm\Wrm  , \\ && \hspace{-5mm} t>0, \\ 
		\Urm(0)&=\Urm_0, && 
	\end{aligned}
	\right.
\end{equation} 
where, $\Rrm_{\h}$ is defined in Subsection \ref{sec-inter} and satisfies \eqref{eqn-data-inter}.

Note that if $\Rrm_{\h}$ satisfies condition \eqref{eqn-data-inter}, then the stochastic system \eqref{STGF-CDA} admits a unique strong (in the probabilistic sense) solution with the same regularity as that of system \eqref{STGF}. Hence, we have the following result:
\begin{theorem}
	Assume that condition \eqref{third-grade-paremeters-res} and Hypothesis \ref{hyp-noise} are satisfied. Also, let $\f \in \Vbb^*$ and $\Urm_0 \in \Hbb$. 
	Then, the system \eqref{STGF-CDA} admits a unique strong solution $\Urm$ with $\Pbb$-a.s. paths in 
	\begin{equation*}
		\C([0,+\infty), \Hbb) \cap \Lrm_{\mathrm{loc}}^2(0,+\infty;\Vbb)\cap \Lrm_{\mathrm{loc}}^{4}(0,+\infty;\mathbb{W}_0^{1,4}(\Ocal)),
	\end{equation*}
and for all $t\in[0,T]$ and all $\v\in\X$, the following identity holds $\Pbb$-a.s.:
\begin{align*}
	 (\Urm(t),\v) 
	& =(\Urm_0,\v)-\int_0^t\langle\v , \; \nu \Acal\Urm(s)+\B(\Urm(s),\Urm(s)) + \alpha \J(\Urm(s)) +\beta\K(\Urm(s)) - \f\rangle\d s
	\nonumber\\ 
	& \quad  + \kappa \int_0^t\left(\v , \Rrm_{\h}(\Urm(s)-\u(s))\right)\drm s
	 \quad  + \int_0^t\left(\v , \; \Grm(s,\Urm(s))\d\W(s)\right).
\end{align*}
\end{theorem}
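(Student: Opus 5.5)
The plan is to reduce the well-posedness of \eqref{STGF-CDA} to the same monotonicity-plus-compactness argument used for \eqref{STGF} in \cite{yas-fer_JNS}, and to treat the nudging term as a perturbation. We fix a solution $\u$ of \eqref{STGF} given by Theorem \ref{thm-StrongSolution} and regard it as a known, progressively measurable input. Since $\u\in\Lrm^2_{\mathrm{loc}}(0,\infty;\Vbb)$ $\Pbb$-a.s., the forcing $\kappa\Pcal\Rrm_{\h}\u$ lies in $\Lrm^2_{\mathrm{loc}}(0,\infty;\Hbb)$ a.s. This follows from \eqref{eqn-data-inter}:
\begin{align*}
\|\Rrm_{\h}\w\|_2\le \|\w\|_2+\sqrt{c_0}\,\h\,\|\w\|_{\mathbb{H}^1}\le C(1+\h)\|\w\|_{\Vbb}, \qquad \w\in\Vbb,
\end{align*}
using the Poincar\'e inequality \eqref{poin}. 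Thus $\Rrm_{\h}$ is a bounded linear map $\Vbb\to\Lbb^2$, and $\Pcal\Rrm_{\h}:\Vbb\to\Hbb$ is bounded as well. Equation \eqref{STGF-CDA} is therefore the third-grade system with an extra random forcing $\f+\kappa\Pcal\Rrm_{\h}\u$ and an extra linear drift $-\kappa\Pcal\Rrm_{\h}\Urm$.

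For existence I would run a Galerkin scheme in a basis of $\X$, as in \cite{yas-fer_JNS}, and derive uniform a priori estimates via It\^o's formula for $\|\Urm_n\|_2^2$. The only new term is
\begin{align*}
\kappa|(\Rrm_{\h}\Urm_n,\Urm_n)|\le \kappa\|\Urm_n\|_2^2+\kappa c_0\h^2\|\Urm_n\|_{\Vbb}^2 .
\end{align*}
Because we only need finite horizons, the first piece is absorbed by Gronwall. For the second piece I would not require smallness of $\h$. Instead I would use Korn and H\"older to get $\|\nabla\w\|_2^2\le C\|\Arm(\w)\|_4^2|\Ocal|^{1/2}$, and then Young's inequality, so that it is dominated by $\frac{\beta}{8}\|\Arm(\Urm_n)\|_4^4+C$. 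The coercive part $\nu\|\Urm_n\|_{\Vbb}^2+\frac{\beta}{2}\|\Arm(\Urm_n)\|_4^4$ comes from the operator $\K$, and $\alpha\J$ is controlled by it thanks to $|\alpha|<\sqrt{2\nu\beta}$, exactly as in the original proof. The mixed term $\kappa(\Rrm_{\h}\u,\Urm_n)$ is bounded by $\kappa\|\Rrm_{\h}\u\|_2^2+\|\Urm_n\|_2^2$. The noise is handled with (H.2) and the Burkholder--Davis--Gundy inequality. One subtlety is that $\u$ only has a.s. $\Lrm^2_{\mathrm{loc}}(\Vbb)$ regularity, not moments in general. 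I would therefore localize with stopping times $\tau_R=\inf\{t:\int_0^t\|\u\|_{\Vbb}^2\,\mathrm{d}s\ge R\}$ and get estimates in $\Lrm^2(\Omega;\C([0,T\wedge\tau_R];\Hbb))\cap \Lrm^2(\Omega;\Lrm^2(\Vbb))\cap\Lrm^4(\Omega;\Lrm^4(\mathbb{W}^{1,4}_0))$. Alternatively, one can use the moment bounds for $\u$ that follow from the same It\^o computation for \eqref{STGF}.

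Passage to the limit is the main obstacle, specifically identifying the weak limit of the nonlinear terms. I would follow the local monotonicity argument of \cite{yas-fer_JNS}. By \eqref{J1-J2} and \eqref{K1-K2}, under \eqref{third-grade-paremeters-res} the operator $\nu\Acal+\alpha\J+\beta\K$ plus the convection term $\B$ is monotone after the weight $e^{-\int_0^t \rho(s)\,\mathrm{d}s}$ is introduced, with $\rho\sim\|\Urm\|_{\infty}^{2}$, which is controlled by \eqref{Sobolev+Korn}. The added term $\kappa\Pcal\Rrm_{\h}$ is linear and bounded $\Vbb\to\Hbb$. Its contribution to the monotonicity inequality, $\kappa(\Rrm_{\h}(\w_1-\w_2),\w_1-\w_2)$, is bounded by $\epsilon\|\w_1-\w_2\|_{\Vbb}^2+C_\epsilon\|\w_1-\w_2\|_2^2$. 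This is absorbed by $\nu\|\cdot\|_{\Vbb}^2$ and by a constant shift of $\rho$, so the Minty-type identification carries over unchanged.

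Pathwise uniqueness follows from the same estimate applied to the difference of two solutions $\Urm_1-\Urm_2$ with the same data. Applying It\^o's formula to $e^{-\int_0^t\rho}\|\Urm_1-\Urm_2\|_2^2$, using (H.3), the bound above for the nudging term, and Gronwall gives $\Urm_1=\Urm_2$. Existence of a probabilistically strong solution then follows as in \cite{yas-fer_JNS}, either directly from the monotonicity method on the given stochastic basis or via Yamada--Watanabe. Since the localization $\tau_R\to\infty$ a.s., the solution extends to $[0,\infty)$ with the stated path regularity and satisfies the weak identity.
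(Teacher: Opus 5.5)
Your proposal is correct and is essentially the paper's own argument made explicit. The paper gives no separate proof, only the remark that under \eqref{eqn-data-inter} the nudging term $\kappa\Pcal\Rrm_{\h}(\Urm-\u)$ is a benign linear perturbation (bounded $\Vbb\to\Hbb$) of \eqref{STGF}, so the well-posedness theory of \cite{yas-fer_JNS} carries over; you check this term by term (a priori bounds, local monotonicity, pathwise uniqueness), and since you only ever bound $|(\Rrm_{\h}\w,\w)|$, your argument is also unaffected by the sign typo in the stated weak identity, which should read $-\kappa\int_0^t(\v,\Rrm_{\h}(\Urm(s)-\u(s)))\drm s$ to match \eqref{STGF-CDA}.
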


\section{Estimates in probability and expectation}\label{Sec3}
In this section, we develop a collection of estimates in probability and expectation that are fundamental to the derivation of the main results of this article.

\subsection{Estimate in probability}
Let us first provide estimates in the probability which will be useful in the sequel.
\begin{lemma}\label{lem-prob-esti-2}
 Assume that condition \eqref{third-grade-paremeters-res} and Hypothesis \ref{hyp-noise} are satisfied. Then, the solution $\u$ of \eqref{STGF} satisfies:
		\begin{align}\label{eqn-Prob-Est-varpi=3}
			& \Pbb\left\{\sup_{t \geq T} \left( \|\u(t)\|^{2}_{2} + \frac{\beta}{4} \int_{0}^{t}\|\Arm(\u(s))\|^4_{4} \drm s - \|\u_0\|^{2}_{2}  - \Mrm t  \right) \geq R \right\}
			\leq 
			\begin{cases}
				e^{-  \frac{\nu\lambda_1}{4K} R}, & \text{ for } L=0; \\
				e^{- \min \left\{ \frac{\beta\lambda_1}{16L |\Ocal|}, \frac{\nu\lambda_1}{4K} \right\} R}, & \text{ for } L>0;
			\end{cases}
		\end{align}
		for all $T\geq 0$ and $R>0$, where $\Mrm$ is given by 
	\begin{align}\label{eqn-Mrm}
	\Mrm =	K  + \frac{1}{4\beta}\left(\frac{\widetilde{K}}{\lambda_1}+1\right)^2|\Ocal|  +  \frac{27\alpha^4 }{4\beta^3} |\Ocal| 
	+  \|\f\|^2_{\Vbb^{*}}.
\end{align}	
\end{lemma}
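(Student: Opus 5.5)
Apply Itô's formula to $\|\u(t)\|_2^2$ for the solution of \eqref{STGF}, then control the martingale part with an exponential martingale inequality.

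\textbf{Energy identity.} Itô's formula gives
\begin{align*}
\drm\|\u\|_2^2 + 2\nu\|\nabla\u\|_2^2\drm t + 2\alpha\langle\u,\J(\u)\rangle\drm t + \beta\|\Arm(\u)\|_4^4\drm t = 2\langle\f,\u\rangle\drm t + \|\Grm(t,\u)\|_{\mathcal{L}_\Qrm}^2\drm t + 2(\u,\Grm(t,\u)\drm\Wrm),
\end{align*}
where we used $b(\u,\u,\u)=0$ and $\langle\u,\K(\u)\rangle=\frac12\|\Arm(\u)\|_4^4$.

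\textbf{Absorbing the drift terms.} We bound the deterministic terms by dissipation.
\begin{itemize}
\item From \eqref{J1-J2} with $\v_2=0$, $|\alpha\langle\u,\J(\u)\rangle|\le\frac{|\alpha|}{2}\int|\Arm(\u)|^3$. Young's inequality with exponents $(4/3,4)$ gives $|\alpha|\int|\Arm(\u)|^3\le \frac{\beta}{4}\|\Arm(\u)\|_4^4+\frac{27\alpha^4}{4\beta^3}|\Ocal|$, which produces the $\frac{27\alpha^4}{4\beta^3}|\Ocal|$ term in $\Mrm$.
\item $2\langle\f,\u\rangle\le\nu\|\nabla\u\|_2^2+\frac1\nu\|\f\|_{\Vbb^*}^2$; the constants can be arranged to produce $\|\f\|_{\Vbb^*}^2$ plus absorbed dissipation.
\item The noise term satisfies $\|\Grm\|^2\le K+\widetilde K\|\u\|_2^2$. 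To absorb $\widetilde K\|\u\|_2^2$ (possibly plus an $\|\u\|_2^2$ term) we use $\|\u\|_2^2\le\lambda_1^{-1}\|\nabla\u\|_2^2$ and $\|\nabla\u\|_2^2\le\frac12\|\Arm(\u)\|_2^2\le \frac12|\Ocal|^{1/2}\|\Arm(\u)\|_4^2$. Young then gives $(\frac{\widetilde K}{\lambda_1}+1)\|\nabla\u\|_2^2\le\frac{\beta}{4}\|\Arm(\u)\|_4^4+\frac{1}{4\beta}(\frac{\widetilde K}{\lambda_1}+1)^2|\Ocal|$, up to the precise splitting.
\end{itemize}
Collecting these, we aim for
\begin{align*}
\|\u(t)\|_2^2+\frac{\beta}{4}\int_0^t\|\Arm(\u)\|_4^4\drm s+\gamma\int_0^t \Theta(s)\,\drm s\le\|\u_0\|_2^2+\Mrm t+M_t,
\end{align*}
where $M_t=2\int_0^t(\u,\Grm\drm\Wrm)$ and we keep a leftover dissipation $\Theta$: a multiple of $\nu\|\nabla\u\|_2^2$ and, when $L>0$, a multiple of $\beta\|\Arm(\u)\|_4^4$.

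\textbf{Exponential martingale inequality.} We use $\Pbb\{\sup_t(M_t-\frac{\epsilon}{2}\langle M\rangle_t)\ge R\}\le e^{-\epsilon R}$. The quadratic variation satisfies $\langle M\rangle_t\le4\int_0^t\|\u\|_2^2(K+\widetilde K\|\u\|_2^2)\drm s$.
\begin{itemize}
\item For $L=0$ (additive case, $\widetilde K=0$), $\langle M\rangle_t\le\frac{4K}{\lambda_1}\int_0^t\|\nabla\u\|_2^2$. Choosing $\epsilon=\frac{\nu\lambda_1}{4K}$ makes $\frac\epsilon2\langle M\rangle_t\le\frac{\nu}{2}\int\|\nabla\u\|_2^2$, which is absorbed by the retained viscous dissipation. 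This yields the bound $e^{-\frac{\nu\lambda_1}{4K}R}$.
\item For $L>0$, the quartic part $\widetilde K\|\u\|_2^4$ must be absorbed. We use $\|\u\|_2^4\le\lambda_1^{-2}\|\nabla\u\|_2^4\le\lambda_1^{-2}\cdot\frac14|\Ocal|\|\Arm(\u)\|_4^4$, together with $\widetilde K\lesssim L$ (the linear growth constant can be taken comparable to $L$ via Lipschitz continuity). This requires $\epsilon\lesssim\frac{\beta\lambda_1}{16L|\Ocal|}$; hence the minimum of the two rates.
\end{itemize}
Since the event with $\sup_{t\ge T}$ is contained in the event with $\sup_{t\ge0}$, the bound holds uniformly in $T$.

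\textbf{Main obstacle.} The delicate step is the constant bookkeeping. The dissipation budget, namely $2\nu\|\nabla\u\|_2^2$ and $\beta\|\Arm(\u)\|_4^4$ minus the $\frac{|\alpha|}{2}\int|\Arm(\u)|^3$ and $\widetilde K$ losses, must leave exactly $\frac\beta4\|\Arm(\u)\|_4^4$ on the left plus enough to absorb $\frac\epsilon2\langle M\rangle$ at the stated rates. We expect this to require splitting $\beta\|\Arm(\u)\|_4^4$ into quarters: one for the $\alpha$-term, one for the noise growth, one kept on the left, and one for the martingale in the $L>0$ case.
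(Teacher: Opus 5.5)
Your proposal is essentially the paper's proof: Itô's formula for $\|\u\|_2^2$, and the same four-way split of $\beta\|\Arm(\u)\|_4^4$. One quarter goes to the $\alpha$-term, one to $(\frac{\widetilde K}{\lambda_1}+1)\|\u\|_{\Vbb}^2$ from the noise growth and the forcing, one is kept, and one absorbs the martingale. You then bound the quadratic variation via $\|\Grm(s,\u)\|_{\mathcal{L}_{\Qrm}}^2\le 2K+2L\|\u\|_2^2$, which is the precise form of your ``$\widetilde K\lesssim L$'', and apply the exponential martingale inequality.

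One caveat applies equally to the paper. Your correct chain $\|\u\|_2^4\le\lambda_1^{-2}\cdot\frac14|\Ocal|\,\|\Arm(\u)\|_4^4$ gives an $L>0$ rate $\frac{\beta\lambda_1^2}{4L|\Ocal|}$, not the stated $\frac{\beta\lambda_1}{16L|\Ocal|}$. The paper reaches the latter only through the slip $\|\u\|_2^4\le\lambda_1^{-1}\|\u\|_{\Vbb}^4$, which should have $\lambda_1^{-2}$. So the stated $L>0$ constant follows from this argument only under an extra condition such as $\lambda_1\ge\frac14$.
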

\begin{proof}
	Applying the It\^o formula to the process $\|\u(\cdot)\|_{2}^{2}$, together with $\eqref{b0}_1$, \eqref{poin}, and H\"older's and Young's inequalities, we obtain
	\begin{align*}
		&\|\u(t)\|^{2}_{2} + \frac{\beta}{2}\int_{0}^{t}\|\Arm(\u(s))\|^{4}_{4} \drm s
		\nonumber\\&
		= \|\u_0\|^{2}_{2} -2\nu\int_{0}^{t}\|\u(s)\|^2_{\Vbb} \drm s - \frac{\beta}{2}\int_{0}^{t}\|\Arm(\u(s))\|^{4}_{4} \drm s +   \int_{0}^{t}\|\Grm(s,\u(s))\|^2_{\mathcal{L}_{\Qrm}} \drm s
		\nonumber\\ 
		& \quad  +  2 \int_{0}^{t}\left(\Grm(s,\u(s))\drm \Wrm(s) , \u(s)\right) 
		 - \alpha  \int_{0}^{t}\int_{\Ocal}  [(\Arm(\u(s)))^2 : \Arm(\u(s)) ] \drm x \drm s
		   +  2 \int_{0}^{t}\left< \u( s),   \f  \right> \drm s
		\nonumber \\&
		\leq \|\u_0\|^{2}_{2}  -2\nu\lambda_1\int_{0}^{t}\|\u(s)\|^2_{2} \drm s  - \frac{\beta}{2}\int_{0}^{t}\|\Arm(\u(s))\|^{4}_{4} \drm s +  \int_{0}^{t}\|\Grm(s,\u(s))\|^2_{\mathcal{L}_{\Qrm}} \drm s 
		\nonumber\\ 
		& \quad + |\alpha|  \int_{0}^{t} \|\Arm(\u(s))\|_4^2  \|\Arm(\u(s))\|_2  \drm s
		 +  2 \int_{0}^{t}\|\f\|_{\Vbb^{*}} \|\u( s)\|_{\Vbb} \drm s 
		 +  2 \int_{0}^{t} \left(\Grm(s,\u(s))\drm \Wrm(s) , \u( s)\right) 
		\nonumber
		\\&
		\leq \|\u_0\|^{2}_{2}  -2\nu\lambda_1\int_{0}^{t}\|\u(s)\|^2_{2} \drm s  - \frac{\beta}{2}\int_{0}^{t}\|\Arm(\u(s))\|^{4}_{4} \drm s + K t + \widetilde{K} \int_{0}^{t}\|\u(s)\|^2_{2} \drm s 
		\nonumber\\ 
		& \quad + 2 |\Ocal|^{\frac{1}{4}} |\alpha|  \int_{0}^{t} \|\Arm(\u(s))\|_4^3 \drm s
		 +   \|\f\|^2_{\Vbb^{*}} t  
		 +  \int_{0}^{t} \|\u( s)\|^2_{\Vbb} \drm s 
		 +  2 \int_{0}^{t} \left(\Grm(s,\u(s))\drm \Wrm(s) , \u( s)\right) 
		\nonumber
		\\&
		\leq \|\u_0\|^{2}_{2} -2\nu\lambda_1\int_{0}^{t}\|\u(s)\|^2_{2} \drm s - \frac{\beta}{2}\int_{0}^{t}\|\Arm(\u(s))\|^{4}_{4} \drm s  + K t + \frac12\left(\frac{\widetilde{K}}{\lambda_1}+1\right) \int_{0}^{t}\|\Arm(\u(s))\|^2_{2} \drm s \nonumber\\ 
		& \quad +  |\Ocal|^{\frac{1}{4}} |\alpha|  \int_{0}^{t} \|\Arm(\u(s))\|_4^3 \drm s
		 +  \|\f\|^2_{\Vbb^{*}} t  
		+  2 \int_{0}^{t} \left(\Grm(s,\u(s))\drm \Wrm(s) , \u( s)\right) 
		\nonumber
		\\&
		\leq \|\u_0\|^{2}_{2} -2\nu\lambda_1\int_{0}^{t}\|\u(s)\|^2_{2} \drm s - \frac{\beta}{2}\int_{0}^{t}\|\Arm(\u(s))\|^{4}_{4} \drm s + K t + \frac12\left(\frac{\widetilde{K}}{\lambda_1}+1\right)|\Ocal|^{\frac12} \int_{0}^{t}\|\Arm(\u(s))\|^2_{4} \drm s 
		\nonumber\\ 
		& \quad +  |\Ocal|^{\frac{1}{4}} |\alpha|  \int_{0}^{t} \|\Arm(\u(s))\|_4^3 \drm s
		  +  \|\f\|^2_{\Vbb^{*}} t  
		 +  2 \int_{0}^{t} \left(\Grm(s,\u(s))\drm \Wrm(s) , \u( s)\right) 
		\nonumber
		\\&
		\leq \|\u_0\|^{2}_{2} -2\nu\lambda_1\int_{0}^{t}\|\u(s)\|^2_{2} \drm s + \Mrm t  
		+  \underbrace{2 \int_{0}^{t} \left(\Grm(s,\u(s))\drm \Wrm(s) , \u( s)\right) }_{=:\mathcal{M}(t)},
	\end{align*}
	where the constant $\Mrm >0$ is independent of time $t$, and given by \eqref{eqn-Mrm}. It follows that $\{\mathcal{M}(t)\}_{t\geq 0}$ forms a martingale, and its quadratic variation is given by
\begin{align*}%\label{PrEs2}
	& [\mathcal{M}](t) 
	\nonumber\\ 
	& \leq 4 \int_{0}^{t} \|\u(s)\|_2^2 \|\Grm(s,\u(s))\|^2_{\mathcal{L}_{\Qrm}} \drm s  
	 \leq 8 \int_{0}^{t} \|\u(s)\|_2^2 [\|\Grm(s,\u(s)) - \Grm(s,\boldsymbol{0})\|^2_{\mathcal{L}_{\Qrm}} + \|\Grm(s,\boldsymbol{0})\|^2_{\mathcal{L}_{\Qrm}}] \drm s  
	\nonumber\\ 
	& \leq 8 \int_{0}^{t}  \left[L\|\u(s)\|_2^4  + K\|\u(s)\|_2^2 \right] \drm s
	 \leq 8 \int_{0}^{t}  \left[\frac{L}{\lambda_1}\|\u(s)\|_{\Vbb}^4  + K\|\u(s)\|_2^2 \right] \drm s
	\nonumber\\ 
	& \leq  8 \int_{0}^{t}  \left[\frac{L}{2\lambda_1}\|\Arm(\u(s))\|_{2}^4  + K\|\u(s)\|_2^2 \right] \drm s
	 \leq  8 \int_{0}^{t}  \left[\frac{L|\Ocal| }{2\lambda_1}\|\Arm(\u(s))\|_{4}^4  + K\|\u(s)\|_2^2 \right] \drm s.
\end{align*}
This helps us to get 
\begin{align*}
	& \|\u(t)\|^{2}_{2} +  \frac{\beta}{4} \int_{0}^{t}\|\Arm(\u(s))\|^4_{4} \drm s -\|\u_0\|^{2}_{2} - \Mrm t
	\nonumber\\
	& \leq    \mathcal{M}(t) -   \int_{0}^{t}\left(2\nu\lambda_1\|\u(s)\|^2_{2} +\frac{\beta}{4}\|\Arm(\u(s))\|^{4}_{4}\right)\drm s
	\nonumber\\ 
	& =    \mathcal{M}(t) -  \frac{ \nu\lambda_1 }{4K} 8K   \int_{0}^{t}\|\u(s)\|^2_{2}\drm s  - \frac{\beta\lambda_1}{16L |\Ocal|} \frac{4L|\Ocal|}{\lambda_1} \int_{0}^{t}\|\u(s)\|^{4}_{4} \drm s	
	\nonumber\\ 
	& \leq    \begin{cases}
		\mathcal{M}(t) - \frac{\nu\lambda_1}{4K} [\mathcal{M}](t), & \text{ for } L=0;\\
		\mathcal{M}(t) - \min \left\{ \frac{\beta\lambda_1}{16L |\Ocal|}, \frac{\nu\lambda_1}{4K} \right\} [\mathcal{M}](t), & \text{ for } L>0,
	\end{cases}
\end{align*}
which together with the exponential martingale inequality \cite[Proposition 3.1]{Glatt-Holtz_2014_Arxiv}, implies that \eqref{eqn-Prob-Est-varpi=3} holds, thereby completing the proof.
\end{proof}

\subsection{Estimate in expectation}
Let us now provide estimates in the expectation which will be used later.

\begin{lemma}\label{lem-pth-moments}
Assume that condition \eqref{third-grade-paremeters-res} and Hypothesis \ref{hyp-noise} are satisfied. Then, the solution $\u$ of \eqref{STGF} satisfies
		\begin{align*}%\label{eqn-moments-4-u}
			& \sup_{t\geq 0}	\Ebb \left[\|\u(t)\|^{4}_{2}\right] \leq \|\u_0\|^{4}_{2} + C,
	\end{align*}
where $C>0$ is some constant independent of $t$.
\end{lemma}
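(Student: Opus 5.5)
Apply the It\^o formula to $\|\u(t)\|_2^4 = (\|\u(t)\|_2^2)^2$. The natural tool is the scalar identity from the proof of Lemma \ref{lem-prob-esti-2}: It\^o's formula applied to $\phi(x)=x^2$ with $x=\|\u\|_2^2$ gives
\begin{align*}
\drm \|\u\|_2^4 = 2\|\u\|_2^2\,\drm\|\u\|_2^2 + 4\|\Grm(t,\u)^*\u\|^2_{\Hcal_0}\,\drm t .
\end{align*}
In the differential of $\|\u\|_2^2$ we use $\eqref{b0}_1$ to kill the convective term and $\langle \u,\K(\u)\rangle=\frac12\|\Arm(\u)\|_4^4$. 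The $\alpha$-term we bound exactly as before, by $|\Ocal|^{1/4}|\alpha|\|\Arm(\u)\|_4^3$, and absorb it into $\frac{\beta}{4}\|\Arm(\u)\|_4^4$ plus a constant through Young's inequality. The forcing is handled by $2\|\f\|_{\Vbb^*}\|\u\|_{\Vbb}\le \nu\|\u\|_{\Vbb}^2+\nu^{-1}\|\f\|_{\Vbb^*}^2$. For the It\^o correction we use (H.2),
\[
\|\Grm\|_{\mathcal L_\Qrm}^2\le K+\widetilde K\|\u\|_2^2,
\]
together with $\widetilde K\|\u\|_2^2\le \widetilde K\lambda_1^{-1}\|\u\|_{\Vbb}^2\le C|\Ocal|^{1/2}\|\Arm(\u)\|_4^2$, which is again absorbed into the quartic dissipation. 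This produces a scalar bound of the form
\[
\drm\|\u\|_2^2 + \big(\nu\lambda_1\|\u\|_2^2 + \tfrac{\beta}{4}\|\Arm(\u)\|_4^4\big)\drm t \le \Mrm'\,\drm t + \drm\mathcal M,
\]
where $\Mrm'$ is a constant similar to \eqref{eqn-Mrm}.

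Multiplying by $2\|\u\|_2^2$ and adding the second-order correction gives
\[
\drm\|\u\|_2^4 + 2\|\u\|_2^2\big(\nu\lambda_1\|\u\|_2^2+\tfrac\beta4\|\Arm(\u)\|_4^4\big)\drm t \le \big(2\Mrm'\|\u\|_2^2 + 4\|\u\|_2^2(K+\widetilde K\|\u\|_2^2)\big)\drm t + 2\|\u\|_2^2\,\drm\mathcal M .
\]
The main obstacle is the term $4\widetilde K\|\u\|_2^4$, since $\widetilde K$ is not assumed small relative to $\nu\lambda_1$. We control it through the $\beta$-dissipation. By Poincar\'e, Korn and H\"older,
\[
\|\u\|_2^2\le \lambda_1^{-1}\|\nabla\u\|_2^2\le C|\Ocal|^{1/2}\|\Arm(\u)\|_4^2 .
\]
Hence
\[
\|\u\|_2^4 = \|\u\|_2^2\cdot\|\u\|_2^2\le C\|\u\|_2^2\|\Arm(\u)\|_4^2\le \epsilon\|\u\|_2^2\|\Arm(\u)\|_4^4 + C_\epsilon\|\u\|_2^2 ,
\]
and choosing $\epsilon$ small absorbs it into $\frac{\beta}{2}\|\u\|_2^2\|\Arm(\u)\|_4^4$. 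The linear terms $C\|\u\|_2^2$ are bounded by $\frac{\nu\lambda_1}{2}\|\u\|_2^4 + C$. We are left with
\[
\drm\|\u\|_2^4 + \nu\lambda_1\|\u\|_2^4\,\drm t\le C\,\drm t + 2\|\u\|_2^2\,\drm\mathcal M .
\]

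To remove the martingale, we work with $e^{\nu\lambda_1 t}\|\u(t)\|_2^4$ and localize with stopping times $\tau_N=\inf\{t:\|\u(t)\|_2>N\}$, so that the stochastic integral is a true martingale with zero mean. Then
\[
\Ebb\big[e^{\nu\lambda_1 (t\wedge\tau_N)}\|\u(t\wedge\tau_N)\|_2^4\big]\le \|\u_0\|_2^4 + \frac{C}{\nu\lambda_1}e^{\nu\lambda_1 t}.
\]
Letting $N\to\infty$ by Fatou (using path continuity in $\Hbb$), and then multiplying by $e^{-\nu\lambda_1 t}$, gives
\[
\Ebb\|\u(t)\|_2^4\le e^{-\nu\lambda_1 t}\|\u_0\|_2^4 + C/(\nu\lambda_1)\le \|\u_0\|_2^4 + C .
\]
Taking the supremum over $t$ concludes the argument. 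The condition \eqref{third-grade-paremeters-res} is used only through $\beta>0$, together with the Young absorption of the $\alpha$-term.
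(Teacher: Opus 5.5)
Your proposal is correct and follows essentially the paper's argument: It\^o's formula for $\|\u\|_2^4$, absorption of the $\alpha$-, forcing- and noise-terms (in particular $\widetilde K\|\u\|_2^4$) into the weighted dissipation $\|\u\|_2^2\|\Arm(\u)\|_4^4$, and the Poincar\'e-driven damping $\nu\lambda_1\|\u\|_2^4$ giving a $t$-independent bound. Your integrating factor $e^{\nu\lambda_1 t}$ with the localization $\tau_N$ and Fatou's lemma is a more careful version of the paper's last step, which takes expectations of the local martingale directly and applies Gronwall to an integral inequality.
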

\begin{proof}
	Using the It\^o formula on $\|\u(\cdot)\|_{2}^{4}$ in combination with $\eqref{b0}_1$, \eqref{poin}, and H\"older's and Young's inequalities gives
	\begin{align}\label{EE2}
		&\|\u(t)\|^{4}_{2} + 4\nu\int_{0}^{t}\|\u(s)\|_{2}^{2}\|\u(s)\|^2_{\Vbb} \drm s + 2\beta\int_{0}^{t}\|\u(s)\|_{2}^{2}\|\Arm(\u(s))\|^{4}_{4} \drm s
		\nonumber\\&
		= \|\u_0\|^{4}_{2} + 2  \int_{0}^{t}\|\u(s)\|_{2}^{2}\|\Grm(s,\u(s))\|^2_{\mathcal{L}_{\Qrm}} \drm s
		+  4 \int_{0}^{t} \|\u(s)\|_{2}^{2} \left(\Grm(s,\u(s))\drm \Wrm(s) , \u(s)\right) 
		\nonumber\\ 
		& \quad - 2 \alpha  \int_{0}^{t} \|\u(s)\|_{2}^{2} \left<  (\Arm(\u(s)))^2 , \Arm(\u(s)) \right> \drm s
		+  4 \int_{0}^{t} \|\u(s)\|_{2}^{2} \; \left< \u( s),   \f  \right> \drm s
		\nonumber\\
		& +  4 \int_{0}^{t} \|(\Grm(s,\u(s)))^{*}\u( s)\|^2_{2}\drm s
		\nonumber \\
		&
		\leq \|\u_0\|^{4}_{2} + 6  \int_{0}^{t} \|\u(s)\|_{2}^{2} \|\Grm(s,\u(s))\|^2_{\mathcal{L}_{\Qrm}} \drm s + 2 |\alpha|  \int_{0}^{t} \|\u(s)\|_{2}^{2} \|\Arm(\u(s))\|_4^2  \|\Arm(\u(s))\|_2  \drm s
		\nonumber\\ & 
		\quad  +  4 \int_{0}^{t} \|\u(s)\|_{2}^{2} \|\f\|_{\Vbb^{*}} \|\u( s)\|_{\Vbb} \drm s 
		  +  4 \int_{0}^{t} \|\u(s)\|_{2}^{2} \left(\Grm(s,\u(s))\drm \Wrm(s) , \u( s)\right) 
		\nonumber \\
		&
		\leq \|\u_0\|^{4}_{2}  + 6K \int_{0}^{t}\|\u(s)\|^2_{2} \drm s  + 6\widetilde{K} \int_{0}^{t}\|\u(s)\|^4_{2} \drm s  + 2 |\alpha|  \int_{0}^{t} \|\u(s)\|_{2}^{2} \|\Arm(\u(s))\|_4^2  \|\Arm(\u(s))\|_2  \drm s
		\nonumber\\ & 
		\quad  +  4 \int_{0}^{t} \|\u(s)\|_{2}^{2} \|\f\|_{\Vbb^{*}} \|\u( s)\|_{\Vbb} \drm s 
		+  4 \int_{0}^{t} \|\u(s)\|_{2}^{2} \left(\Grm(s,\u(s))\drm \Wrm(s) , \u( s)\right) 
		\nonumber \\
		&
		\leq \|\u_0\|^{4}_{2} + \beta\int_{0}^{t}\|\u(s)\|_{2}^{2}\|\Arm(\u(s))\|^{4}_{4} \drm s + t C_{K,\widetilde{K},\alpha,\beta,\lambda_1,|\Ocal|,\|\f\|_{\Vbb^{*}}}
		\nonumber\\
		& +  4 \int_{0}^{t} \|\u(s)\|_{2}^{2} \left(\Grm(s,\u(s))\drm \Wrm(s) , \u( s)\right),
	\end{align}
	where the constant $C := C_{K,\widetilde{K},\alpha,\beta,\lambda_1,|\Ocal|,\|\f\|_{\Vbb^{*}}} >0$ is independent of time $t$.
	
	Considering expectations in \eqref{EE2}, applying \eqref{poin}, and recalling that the stochastic integral defines a (local) martingale, we conclude that
	\begin{align*}
		&\Ebb \left[\|\u(t)\|^{4}_{2}\right]
		+  4\nu\lambda_1 \int_{0}^{t}\Ebb\left[\|\u(s)\|_{2}^{4}\right]\drm s 
		\leq \|\u_0\|^{2}_{2}  +      t C,
	\end{align*}
	which, using Gronwall’s inequality, yields
	\begin{align*}
		\Ebb \left[\|\u(t)\|^{4}_{2}\right] \leq \|\u_0\|^{4}_{2} e^{- 4\nu\lambda_1 t} + \frac{C}{4\nu\lambda_1} (1- e^{-4\nu\lambda_1 t}).
	\end{align*}
	Hence, we complete the proof.
\end{proof}

\begin{lemma}\label{lem-pth-moments-Urm}
	Assume that condition \eqref{third-grade-paremeters-res} and Hypothesis \ref{hyp-noise} are satisfied. Then, the solution $\Urm$ of \eqref{STGF-CDA} satisfies
		\begin{align*}%\label{eqn-moments-2p-Urm}
			& \sup_{t\geq 0}	\Ebb \left[\|\Urm(t)\|^{4}_{2}\right] \leq   C(1+\|\u_0\|^{4}_{2} + \|\Urm_0\|^{4}_{2}),
	\end{align*}
where $C>0$ is a constant independent of $t$.
\end{lemma}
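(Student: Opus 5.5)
We mimic the proof of Lemma \ref{lem-pth-moments}, applying the It\^o formula to $\|\Urm(\cdot)\|_2^4$. The only new ingredient is the nudging term $-\kappa\Pcal\Rrm_{\h}(\Urm-\u)$. Write it as
\[
\Rrm_{\h}(\Urm-\u)=(\Urm-\u)-\big[(\Urm-\u)-\Rrm_{\h}(\Urm-\u)\big].
\]
The It\^o formula then produces, on the right-hand side, the term
\[
-4\kappa\int_0^t\|\Urm\|_2^2\big(\Rrm_{\h}(\Urm-\u),\Urm\big)\,\drm s .
\]
Its leading part gives $-4\kappa\|\Urm\|_2^4+4\kappa\|\Urm\|_2^2(\u,\Urm)$. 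The remainder is bounded using \eqref{eqn-data-inter} together with Poincar\'e \eqref{poin}:
\[
4\kappa\|\Urm\|_2^2\,c_0^{1/2}\h\,\|\Urm-\u\|_{\mathbb{H}^1}\|\Urm\|_2 .
\]

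The terms coming from $\nu\Acal$, $\B$, $\J$, $\K$, $\f$ and $\Grm$ are handled exactly as in \eqref{EE2}. We use $\eqref{b0}_1$, absorb the $\alpha$-term into the $\beta\|\Urm\|_2^2\|\Arm(\Urm)\|_4^4$ dissipation, and use (H.2) for the It\^o correction. The $\widetilde K\|\Urm\|_2^4$ contribution is absorbed by writing $\|\Urm\|_2^4\le C\|\Urm\|_2^2\|\Arm(\Urm)\|_4^2$, as was done there, and then applying Young's inequality against $\beta\|\Urm\|_2^2\|\Arm(\Urm)\|_4^4$. The new terms are treated by Young's inequality as follows.
\begin{itemize}
\item The cross term satisfies $4\kappa\|\Urm\|_2^3\|\u\|_2\le 2\kappa\|\Urm\|_2^4+C_\kappa\|\u\|_2^4$.
\item The interpolation remainder is split. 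The $\Urm$-part $\|\Urm\|_2^3\|\Urm\|_{\Vbb}$ is absorbed by $\nu\|\Urm\|_2^2\|\Urm\|_{\Vbb}^2$ plus $C\kappa^2\h^2\|\Urm\|_2^4$. Alternatively, assuming no condition on $\h$, it is absorbed by $\|\Urm\|_2^2\|\Arm(\Urm)\|_4^4$ via $\|\Urm\|_{\Vbb}\le C\|\Arm(\Urm)\|_4$ and Young with exponents $4,4/3$ in the quantity $\|\Urm\|_2^{1/2}$. The $\u$-part $\|\Urm\|_2^3\|\u\|_{\Vbb}$ is bounded by $\epsilon\|\Urm\|_2^2\|\Arm(\Urm)\|_4^{4}+\epsilon\|\Urm\|_2^4+C\|\u\|_{\Vbb}^{4}$.
\end{itemize}
This is the main obstacle. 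The right-hand side now involves $\u$-quantities whose time integrals must be controlled in expectation uniformly in time. A term like $\|\u\|_{\Vbb}^4$ is not controlled by Lemma \ref{lem-pth-moments}. I would therefore refine the Young splitting so that the $\u$-gradient appears only at power at most $4$ times a weight of $\|\u\|_2$, specifically in the forms $\|\u\|_2^2\|\Arm(\u)\|_4^4$ and $\|\u\|_2^4$. The integrals of these are controlled by \eqref{EE2}: taking expectations there gives $\Ebb\int_0^t\|\u\|_2^2\|\Arm(\u)\|_4^4\,\drm s\le C(\|\u_0\|_2^4+t)$. Since this bound grows linearly in $t$, a plain integral bound is insufficient. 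Instead I would keep an exponential weight: apply It\^o to $e^{\gamma t}\|\Urm(t)\|_2^4$ and, simultaneously, to $e^{\gamma t}\|\u(t)\|_2^4$, with $\gamma$ small. Redoing \eqref{EE2} with the weight yields
\[
\Ebb\int_0^t e^{\gamma s}\big(\|\u\|_2^4+\|\u\|_2^2\|\Arm(\u)\|_4^4\big)\,\drm s\le C\big(\|\u_0\|_2^4+e^{\gamma t}\big).
\]

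The conclusion then follows in two steps. First take expectations; the stochastic integrals are local martingales, handled by localization with stopping times and Fatou's lemma. Keeping the absorbed dissipation and the good term $-2\kappa\|\Urm\|_2^4$ (or $4\nu\lambda_1\|\Urm\|_2^4$ from \eqref{poin}), we obtain, with $\gamma$ smaller than the resulting decay rate,
\[
e^{\gamma t}\Ebb\|\Urm(t)\|_2^4\le\|\Urm_0\|_2^4+C\,\Ebb\int_0^t e^{\gamma s}\big(1+\|\u\|_2^4+\|\u\|_2^2\|\Arm(\u)\|_4^4\big)\,\drm s .
\]
Second, insert the weighted bound for $\u$ and divide by $e^{\gamma t}$. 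This gives $\Ebb\|\Urm(t)\|_2^4\le C(1+\|\u_0\|_2^4+\|\Urm_0\|_2^4)$ uniformly in $t$, which is the claim.
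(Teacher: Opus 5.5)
\textbf{Verdict: there is a genuine gap, but it is easy to repair.} Most of your scheme is sound and matches what the paper intends; the paper's own proof only cites \cite[Lemma A.4]{BFLZ_2025_Arxiv} for the nudging term and invokes Lemma \ref{lem-pth-moments}. The sound parts are: It\^o for $e^{\gamma t}\|\Urm\|_2^4$, absorbing the $\alpha$-, $\widetilde K$- and It\^o-correction terms into $\beta\|\Urm\|_2^2\|\Arm(\Urm)\|_4^4$, the cross term via Lemma \ref{lem-pth-moments}, no restriction on $\h$, and localization.

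\textbf{Where it fails.} You propose to refine the Young splitting so that $\u$ enters only through $\|\u\|_2^4$ and $\|\u\|_2^2\|\Arm(\u)\|_4^4$. No pointwise bound of $4\kappa c_0^{1/2}\h\|\Urm\|_2^3\|\u\|_{\mathbb{H}^1}$ by $\epsilon\cdot(\text{dissipation of }\Urm)+C(1+\|\u\|_2^4+\|\u\|_2^2\|\Arm(\u)\|_4^4)$ exists. To see this, fix a smooth $\Urm$ with $\|\Urm\|_2=1$, so every dissipation term of $\Urm$ is a fixed number. Then take an oscillating divergence-free $\u$, e.g.\ in 2D $\u=\nabla^{\perp}\big(A^{-5}\phi(x)\sin(A^3x_1)\big)$ with $\phi$ compactly supported. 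It satisfies $\|\u\|_2\sim A^{-2}$ and $\|\u\|_{\mathbb{H}^1}\sim\|\Arm(\u)\|_4\sim A$. The left side is then of order $A$, while $\|\u\|_2^4+\|\u\|_2^2\|\Arm(\u)\|_4^4=O(1)$. So, as written, your final weighted inequality for $\Ebb\|\Urm(t)\|_2^4$ is not justified.

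\textbf{The fix.} Your original worry was unfounded, and your first splitting was already fine. The bare gradient term is controlled, not by Lemma \ref{lem-pth-moments}, but by the basic $\mathbb{L}^2$ energy identity. By Poincar\'e, H\"older and \eqref{Korn-ineq},
\[
\|\u\|_{\mathbb{H}^1}^4\le C\|\u\|_{\Vbb}^4\le C\|\Arm(\u)\|_4^4 ,
\]
and $\|\Arm(\u)\|_4^4$ is exactly the dissipation in the first display of the proof of Lemma \ref{lem-prob-esti-2}. Apply It\^o to $e^{\gamma t}\|\u(t)\|_2^2$ with the same bounds, absorbing $\gamma\|\u\|_2^2\le C\|\Arm(\u)\|_4^2\le \frac{\beta}{4}\|\Arm(\u)\|_4^4+C$. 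This gives
\[
\Ebb\int_0^t e^{\gamma s}\|\Arm(\u(s))\|_4^4\,\drm s\le C\big(\|\u_0\|_2^2+e^{\gamma t}\big).
\]
Now use three ingredients:
\begin{itemize}
\item Keep $4\kappa c_0^{1/2}\h\|\Urm\|_2^3\|\u\|_{\mathbb{H}^1}\le\kappa\|\Urm\|_2^4+C\|\Arm(\u)\|_4^4$.
\item Bound $\Ebb\int_0^t e^{\gamma s}\|\u\|_2^4\,\drm s\le C(1+\|\u_0\|_2^4)e^{\gamma t}$ by Lemma \ref{lem-pth-moments}.
\item Absorb $\gamma\|\Urm\|_2^4$ into the quartic dissipation.
\end{itemize}
Together these yield
\[
e^{\gamma t}\Ebb\|\Urm(t)\|_2^4\le\|\Urm_0\|_2^4+C\big(\|\u_0\|_2^2+(1+\|\u_0\|_2^4)e^{\gamma t}\big).
\]
Dividing by $e^{\gamma t}$ gives the claim. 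The weighted bound for $\|\u\|_2^2\|\Arm(\u)\|_4^4$ that you derived from \eqref{EE2} is not needed.
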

\begin{proof}
	Note that system \eqref{STGF-CDA} contains an additional term, $-\kappa \Rrm_{\h}(\Urm-\u)$, compared to system \eqref{STGF}. This term can be estimated in the same way as in the proof of \cite[Lemma A.4]{BFLZ_2025_Arxiv}. Therefore, we omit the proof. Making use of Lemma \ref{lem-pth-moments}, one can complete the proof.
\end{proof}

\section{Continuous data assimilation}\label{Sec4}

We perform the convergence analysis in expectation, which ensures that the results are valid in the mean-square sense. Assuming suitable conditions on the observational resolution $\h$ and the nudging parameter $\kappa$, we can show that
\[
\mathbb{E}\bigl[\|\u(t)-\Urm(t)\|_2^2\bigr] \to 0
\quad \text{as } t \to +\infty.
\]
In data assimilation literature, the quantity $\mathbb{E}\bigl[\|\u(t)-\Urm(t)\|_2^2\bigr]$ is often referred to as a Foias-Prodi estimate in expectation. This result indicates that the control term $\kappa\,\Rrm_{\h}\bigl(\Urm(t)-\u(t)\bigr),$ when $\kappa$ and $\h$ are chosen appropriately, successfully drives the nudged solution $\Urm$ toward the reference solution $\u$ in the mean-square sense as time grows large.  

Our initial derivation is carried out for a general stopping time, with the specific choice of stopping time postponed to a subsequent stage of the analysis.

\begin{lemma}\label{lem-Difference}
	 Suppose that condition \eqref{third-grade-paremeters-res}, Hypothesis \ref{hyp-noise} and assumption \eqref{eqn-data-inter} are satisfied.  Assume that $\u$ and $\Urm$ are the unique solutions of  systems \eqref{STGF} and \eqref{STGF-CDA}, respectively. If $\kappa$ and $\h$ are such that 
		\begin{align}\label{condition-on-sigma-4}
			0  <  \kappa \leq   \frac{  \nu \varepsilon_0 }{c_0 \h^2 }
		\end{align}
		where $c_0$ is the constant appearing in \eqref{eqn-data-inter}, then for any $\delta\ge 0$, any stopping time $\tau$ and any $t \ge 0$:
		\begin{align}\label{eqn-Difference-3}
			\mathbb{E}\left[e^{\left(\frac{\kappa}{1+\delta} -  L \right)t\wedge \tau  -  \frac{27[\mathrm{N}_d]^4}{16\lambda_1\nu^3\varepsilon_0^3} \int_0^{t\wedge \tau}\|\Arm(\u(s))\|_{4}^4\drm s}\left\| \u(t\wedge \tau)-\Urm(t\wedge \tau) 
			\right\|_{2}^2\right] 
			&\leq \left\|\u_0 - \Urm_0 \right\|_{2}^2.
		\end{align}
\end{lemma}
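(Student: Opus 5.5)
Set $\boldsymbol{\eta}:=\u-\Urm$ and subtract \eqref{STGF-CDA} from \eqref{STGF}:
\begin{align*}
\drm\boldsymbol{\eta}+\nu\Acal\boldsymbol{\eta}\,\drm t+[\B(\u,\u)-\B(\Urm,\Urm)]\drm t+\alpha[\J(\u)-\J(\Urm)]\drm t+\beta[\K(\u)-\K(\Urm)]\drm t
=-\kappa\Pcal\Rrm_{\h}\boldsymbol{\eta}\,\drm t+[\Grm(t,\u)-\Grm(t,\Urm)]\drm\Wrm .
\end{align*}
We then apply It\^o's formula to $e^{\Psi(t)}\|\boldsymbol{\eta}(t)\|_2^2$, where
\[
\Psi(t):=\Big(\tfrac{\kappa}{1+\delta}-L\Big)t-\tfrac{27[\mathrm{N}_d]^4}{16\lambda_1\nu^3\varepsilon_0^3}\int_0^t\|\Arm(\u(s))\|_4^4\drm s
\]
is a continuous adapted process of bounded variation. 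This gives $\drm(e^{\Psi}\|\boldsymbol{\eta}\|_2^2)=e^{\Psi}\big(\Psi'\|\boldsymbol{\eta}\|_2^2\drm t+\drm\|\boldsymbol{\eta}\|_2^2\big)$. The plan is to show that the $\drm t$-part of this expression is nonpositive pathwise. Once that holds, we stop at $t\wedge\tau$ (localizing further by $\tau_N$ so that the stochastic integral is a genuine martingale), take expectations, and let $N\to\infty$ using Fatou's lemma. This yields \eqref{eqn-Difference-3}.

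The drift of $\|\boldsymbol{\eta}\|_2^2$ has the following terms.
\begin{itemize}
\item The viscous term gives $-2\nu\|\boldsymbol{\eta}\|_{\Vbb}^2$.
\item The noise gives at most $L\|\boldsymbol{\eta}\|_2^2$ by (H.3). This cancels the $-L$ in $\Psi'$.
\item For the nudging term, write $\Rrm_{\h}\boldsymbol{\eta}=\boldsymbol{\eta}-(\boldsymbol{\eta}-\Rrm_{\h}\boldsymbol{\eta})$ and use \eqref{eqn-data-inter} with Young's inequality (weight $\delta$):
\[
-2\kappa(\Rrm_{\h}\boldsymbol{\eta},\boldsymbol{\eta})\le-\tfrac{2\kappa}{1+\delta}\|\boldsymbol{\eta}\|_2^2+C_\delta\kappa c_0\h^2\|\boldsymbol{\eta}\|_{\Vbb}^2 .
\]
Only $\kappa/(1+\delta)$ of this is spent against $\Psi'$. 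The gradient remainder is absorbed into $\nu\varepsilon_0\|\boldsymbol{\eta}\|_{\Vbb}^2$ using \eqref{condition-on-sigma-4}, possibly after renormalizing constants.
\end{itemize}

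The Navier–Stokes term becomes $-2b(\boldsymbol{\eta},\u,\boldsymbol{\eta})$ by \eqref{b0}.

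\textbf{Main obstacle.} The key step is to control $-2b(\boldsymbol{\eta},\u,\boldsymbol{\eta})$ together with the $\alpha$-term so that only $\|\Arm(\u)\|_4^4$ appears, with the exact constant $\frac{27[\mathrm{N}_d]^4}{16\lambda_1\nu^3\varepsilon_0^3}$.

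For the monotone part, \eqref{K1-K2} gives
\[
-2\beta\langle\boldsymbol{\eta},\K(\u)-\K(\Urm)\rangle\le-\beta\int_{\Ocal}|\Arm(\boldsymbol{\eta})|^2(|\Arm(\u)|^2+|\Arm(\Urm)|^2)\,\drm x .
\]
By \eqref{J1-J2} the $\alpha$-term is bounded by $|\alpha|\int|\Arm(\boldsymbol{\eta})|^2(|\Arm(\u)|+|\Arm(\Urm)|)$. Young's inequality then gives
\[
|\alpha|\,|\Arm(\boldsymbol{\eta})|^2|\Arm(\u)|\le\beta|\Arm(\boldsymbol{\eta})|^2|\Arm(\u)|^2+\tfrac{\alpha^2}{4\beta}|\Arm(\boldsymbol{\eta})|^2 ,
\]
and similarly for $\Urm$. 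The leftover $\frac{\alpha^2}{2\beta}\|\Arm(\boldsymbol{\eta})\|_2^2=\frac{\alpha^2}{\beta}\|\boldsymbol{\eta}\|_{\Vbb}^2$ is absorbed by the viscous term because $|\alpha|<\sqrt{2\nu\beta}$ from \eqref{third-grade-paremeters-res}. This is where $\varepsilon_0$ should be defined, namely so that $2\nu-\frac{\alpha^2}{\beta}$ leaves a margin of order $\nu\varepsilon_0$.

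For the convective term we move the derivative onto $\boldsymbol{\eta}$:
\[
|b(\boldsymbol{\eta},\u,\boldsymbol{\eta})|=|b(\boldsymbol{\eta},\boldsymbol{\eta},\u)|\le\|\u\|_\infty\|\boldsymbol{\eta}\|_{\Vbb}\|\boldsymbol{\eta}\|_2\le\mathrm{N}_d\|\Arm(\u)\|_4\|\boldsymbol{\eta}\|_{\Vbb}\|\boldsymbol{\eta}\|_2 ,
\]
using \eqref{Sobolev+Korn}. Interpolating with \eqref{poin},
\[
\|\boldsymbol{\eta}\|_{\Vbb}\|\boldsymbol{\eta}\|_2\le\lambda_1^{-1/4}\|\boldsymbol{\eta}\|_{\Vbb}^{3/2}\|\boldsymbol{\eta}\|_2^{1/2} .
\]
Young's inequality with exponents $(4/3,4)$ against the residual $\nu\varepsilon_0\|\boldsymbol{\eta}\|_{\Vbb}^2$ then yields a term of the form $\frac{27\mathrm{N}_d^4}{16\lambda_1\nu^3\varepsilon_0^3}\|\Arm(\u)\|_4^4\|\boldsymbol{\eta}\|_2^2$. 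This term is exactly cancelled by $\Psi'$.

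The delicate bookkeeping is the allocation of the viscous budget $2\nu\|\boldsymbol{\eta}\|_{\Vbb}^2$ among three consumers: the $\alpha$-leftover, the nudging remainder, and the convective Young step. I would first fix $\varepsilon_0$ from the physical condition $|\alpha|<\sqrt{2\nu\beta}$ and then check that the constants match as stated. If they do not match exactly, I would adjust the Young weights while keeping the same structure.
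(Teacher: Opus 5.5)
You have the paper's architecture: It\^o's formula for $e^{\Psi}\|\boldsymbol{\eta}\|_2^2$ with the same integrating factor, and the convective bound via $\eqref{b0}_2$, \eqref{Sobolev+Korn}, \eqref{poin} and a $(4/3,4)$-Young step against $\nu\varepsilon_0\|\boldsymbol{\eta}\|_{\Vbb}^2$, which does give exactly $\frac{27[\mathrm{N}_d]^4}{16\lambda_1\nu^3\varepsilon_0^3}$. You also use \eqref{J1-J2}--\eqref{K1-K2} for the non-Newtonian terms, and your $\tau_N$-localization with Fatou is, if anything, more careful than the paper's. Your $\alpha$-split differs from the paper's but is fine. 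The paper sets $\varepsilon_0:=1-|\alpha|/\sqrt{2\nu\beta}$ and uses Young weight $\nu(1-\varepsilon_0)$, leaving exactly $2\nu\varepsilon_0\|\boldsymbol{\eta}\|_{\Vbb}^2$. Spending all of the $\beta$-dissipation, as you do, leaves $2\nu-\alpha^2/\beta=2\nu\varepsilon_0(2-\varepsilon_0)\ge 2\nu\varepsilon_0$. So $\nu\varepsilon_0$ for the convective term plus $\nu\varepsilon_0$ for the nudging term is available.

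The step that fails is the nudging estimate, because you tie the Young weight to $\delta$. Getting from $-2\kappa\|\boldsymbol{\eta}\|_2^2+2\kappa\|\boldsymbol{\eta}\|_2\|\boldsymbol{\eta}-\Rrm_{\h}\boldsymbol{\eta}\|_2$ down to $-\frac{2\kappa}{1+\delta}\|\boldsymbol{\eta}\|_2^2$ forces $C_\delta\ge\frac{1+\delta}{2\delta}$ (a discriminant check shows this is sharp). Absorbing $\frac{1+\delta}{2\delta}\kappa c_0\h^2\|\boldsymbol{\eta}\|_{\Vbb}^2$ into $\nu\varepsilon_0\|\boldsymbol{\eta}\|_{\Vbb}^2$ using only \eqref{condition-on-sigma-4} then requires $\delta\ge 1$. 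Even the extra viscous margin from your $\alpha$-split only lowers this threshold to $\delta\ge 1/(5-4\varepsilon_0)$. At $\delta=0$, which the lemma explicitly allows, $C_\delta=\infty$. ``Renormalizing constants'' is not an option, because it would change the hypothesis \eqref{condition-on-sigma-4}.

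The fix, which is what the paper does, is to decouple this estimate from $\delta$ by using weight one:
\[
-2\kappa(\Rrm_{\h}\boldsymbol{\eta},\boldsymbol{\eta})\le-\kappa\|\boldsymbol{\eta}\|_2^2+\kappa c_0\h^2\|\boldsymbol{\eta}\|_{\Vbb}^2 .
\]
The remainder fits into the $\nu\varepsilon_0\|\boldsymbol{\eta}\|_{\Vbb}^2$ left after the convective step precisely under \eqref{condition-on-sigma-4}. Then observe that $\frac{\kappa}{1+\delta}\le\kappa$ for every $\delta\ge0$. Hence the drift of $e^{\Psi}\|\boldsymbol{\eta}\|_2^2$ is at most $-\frac{\kappa\delta}{1+\delta}e^{\Psi}\|\boldsymbol{\eta}\|_2^2\le 0$. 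The parameter $\delta$ only creates this extra slack, which is used later in Corollary \ref{cor-stop-time}; the interpolant estimate never has to match it.
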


\begin{proof}
	 We define $\w(\cdot):=\Urm(\cdot)-\u(\cdot)$, then $\w(\cdot)$ satisfies
	\begin{equation*}
		\left\{
		\begin{aligned}
			\drm \w(t) + \big[\nu \Acal\w(t)      &  + \left( \B(\Urm(t),\w(t)) + \B(\w(t),\u(t)) \right)  + \alpha\left( \J(\Urm(t)) - \J(\u(t)) \right)  \\  + \beta\left( \K(\Urm(t)) - \K(\u(t)) \right)  \big]\, \drm t  
			& = - \kappa \Pcal \Rrm_{\h}(\w(t)) + [\Grm(t,\Urm(t))-\Grm(t,\u(t))]\, \drm\Wrm(t), \;\; t>0  \\ 
			\w(0)& =\Urm_0 - \u_0.
		\end{aligned}
		\right.
	\end{equation*}
	By applying the It\^o formula to $\|\w(\cdot)\|_2^2$ and employing $\eqref{b0}_1$, we get, $\mathbb{P}$-a.s.,
	\begin{align}\label{Diff-1}
		& \|\w(t)\|^2_2 + 2\nu \int_0^t \|\w(s)\|^2_{\Vbb}\,\drm s + 2 \beta \int_0^t \left\langle\w(s) , \K(\Urm(s)) - \K(\u(s)) \right\rangle \drm s
		\nonumber\\
		&  =  \|\w(0)\|^2_2 - 2 \int_0^t \left\langle \w (s), \B(\w(s), \u(s)) \right\rangle \drm s   + 2 \kappa \int_0^t  \left( \w (s),  \Rrm_{\h}(\w(s)) \right) \drm s  
		\nonumber\\
		 &\quad - 2 \alpha \int_0^t \left\langle\w(s) , \J(\Urm(s)) - \J(\u(s)) \right\rangle \drm s
		 + \int_0^t \|\Grm(s,\Urm(s))-\Grm(s,\u(s))\|^2_{\mathcal{L}_{\Qrm}} \drm s 
		 \nonumber\\ 
		 & \quad  +  2 \int_0^t \big( \w (s), [\Grm(s,\Urm(s))-\Grm(s,\u(s))]\, \drm\Wrm(s) \big)
		\nonumber\\
		&  = \|\w(0)\|^2_2 - 2\kappa  \int_0^t \|\w(s)\|^2_2 \,\drm s - 2 \int_0^t  b( \w(s), \u(s), \w (s) ) \drm s    + 2 \kappa \int_0^t  \left( \w (s),  \Rrm_{\h}(\w(s)) - \w(s) \right) \drm s  
		\nonumber\\ & 
		\quad - 2 \alpha \int_0^t \left\langle\w(s) , \J(\Urm(s)) - \J(\u(s)) \right\rangle \drm s
	+ \int_0^t \|\Grm(s,\Urm(s))-\Grm(s,\u(s))\|^2_{\mathcal{L}_{\Qrm}} \drm s  
		\nonumber\\ 
	& \quad  +  2 \int_0^t \big( \w (s), [\Grm(s, \Urm(s))-\Grm(s, \u(s))]\, \drm\Wrm(s) \big).
	\end{align}

For $\varepsilon_0:=1-\sqrt{\frac{\alpha^2}{2\beta\nu}}\in(0,1)$ (it is possible to define by \eqref{third-grade-paremeters-res}), from \eqref{J1-J2} and H\"older's inequality, we get
\begin{align}\label{Diff-2}
	 |2\alpha\left<	\w, \J(\Urm) - \J(\u) \right>| 
	 & \leq \nu(1-\varepsilon_0) \|\Arm(\w)\|_{2}^2 + \frac{\alpha^2}{2\nu(1-\varepsilon_0)}  \int_{\Ocal} |\Arm(\w)|^2 ( |\Arm(\Urm)|^2+|\Arm(\u)|^2) \drm x
	\nonumber\\ & = 2\nu(1-\varepsilon_0)\|\w\|_{\Vbb}^2 + \beta(1-\varepsilon_0)  \int_{\Ocal} |\Arm(\w)|^2( |\Arm(\Urm)|^2+|\Arm(\u)|^2) \drm x.
\end{align}

From \eqref{K1-K2}, we have 
\begin{align}\label{Diff-3}
	&2 \beta\left<	 \w, \K(\Urm) - \K(\u) \right>
	  = \beta \int_{\Ocal}( |\Arm(\Urm)|^2-|\Arm(\u)|^2)^2 \drm x + \beta \int_{\Ocal} |\Arm(\w)|^2( |\Arm(\Urm)|^2+|\Arm(\u)|^2) \drm x.
\end{align}

From Hypothesis \ref{hyp-noise}, we have
	\begin{align}\label{Diff-4}
		\|\Grm(s,\Urm)-\Grm(s,\u)\|^2_{\mathcal{L}_{\Qrm}} \leq L \|\w\|_2^2.
	\end{align}
	Using $\eqref{b0}_2$, \eqref{Sobolev+Korn}, \eqref{poin}, \eqref{eqn-data-inter}, and H\"older's, Ladyzhenskaya's  and Young's inequalities, we get 
	\begin{align}\label{Diff-5}
		2\left|   b( \w, \u, \w )   + \kappa \left( \w,  \Rrm_{\h}(\w) - \w \right)  \right| 
		& \leq  2\left|   b( \w, \w, \u )  \right|  + 2 \left| \kappa \left( \w, \Rrm_{\h}(\w) - \w \right)  \right| 
		\nonumber\\ 
		&  \leq   2  \|\w\|_{\Vbb} \|\w\|_2 \|\u\|_{\infty}   + 2  \kappa \| \w\|_2 \| \Rrm_{\h}(\w) - \w\|_2 
		\nonumber\\ 
		&  \leq    \frac{2 \mathrm{N}_d}{(\lambda_1)^{\frac14}}  \|\w\|^{\frac32}_{\Vbb} \|\w\|_2^{\frac12} \|\Arm(\u)\|_{4}  +   \kappa \| \w\|^2_2 + \kappa \| \Rrm_{\h}(\w) - \w\|^2_2 
		\nonumber\\ 
		&  \leq  \nu\varepsilon_0 \|\w\|^2_{\Vbb} +  \frac{27[\mathrm{N}_d]^4}{16\lambda_1\nu^3\varepsilon_0^3} \|\w\|^2_2 \|\Arm(\u)\|^4_{4} +   \kappa \| \w\|^2_2 + \kappa c_0 \h^2 \|\w\|^2_{\Vbb} .
	\end{align}
	Combining \eqref{Diff-1}-\eqref{Diff-5} yields the following:
	\begin{align}\label{Diff-6}
		& \|\w(t)\|^2_2 + (\nu\varepsilon_0 -\kappa c_0 \h^2) \int_0^t \|\w(s)\|^2_{\Vbb}\,\drm s  +  \int_0^t  \left(\kappa - \frac{27[\mathrm{N}_d]^4}{16\lambda_1\nu^3\varepsilon_0^3} \|\Arm(\u(s))\|_{4}^4  - L\right) \|\w(s)\|^2_2 \,\drm s 
		\nonumber\\
		&  \leq  \|\w(0)\|^2_2 + 2 \int_0^t \big( \w (s), [\Grm(\Urm(s))-\Grm(\u(s))]\, \drm\Wrm(s) \big).	
	\end{align}
	For $0<\kappa \le \frac{\nu \varepsilon_0}{c_0 \h^2}$, applying the It\^o formula to the process $e^{\varrho(\cdot)}\|\w(\cdot)\|_{2}^2$ yields, $\mathbb{P}$-a.s.,
	\begin{align}\label{Diff-7}
		& e^{\varrho(t)} \|\w(t)\|^2_2  + \frac{\kappa\delta}{1+\delta} \int_{0}^{t} e^{\varrho(s)} \|\w(s)\|^2_2 \drm s
		\leq \|\w(0)\|^2_2 + 2 \int_0^t e^{\varrho(s)}  \big( \w (s), [\Grm(\Urm(s))-\Grm(\u(s))]\, \drm\Wrm(s) \big),
	\end{align}
where 
\begin{align*}%\label{eqn-varrho}
	\varrho(t)= \left(\frac{\kappa}{1+\delta} -  L \right)t  -  \frac{27[\mathrm{N}_d]^4}{16\lambda_1\nu^3\varepsilon_0^3} \int_0^t\|\Arm(\u(s))\|_{4}^4\drm s
\end{align*} 
so that 
\begin{align*}
	\varrho'(t)= \frac{\kappa}{1+\delta} -  L - \frac{27[\mathrm{N}_d]^4}{16\lambda_1\nu^3\varepsilon_0^3} \|\Arm(\u(t))\|_{4}^4, \ \text{ for a.e. } t.
\end{align*}
	 Applying expectations to \eqref{Diff-7} and recalling that the stochastic integral is a (local) martingale yields
	\begin{align*}
		& \Ebb \left[e^{\varrho(t\land \tau)} \|\w(t\land \tau)\|^2_2 \right] 
		\leq  \|\w(0)\|^2_2,
	\end{align*}
	for any $\delta\geq 0$, any stopping time $\tau$ and any $t \geq 0$. This conclude the proof.	
\end{proof}

Next, a suitable stopping time is introduced to control the integrating factor in \eqref{eqn-Difference-3}. For $R$, $\uppi$, $\delta$, $\kappa>0,$ define
\begin{align}\label{eqn-stopping-time}
	\uptau_{R,\uppi,\delta,\kappa}:= \inf \left\{ s\geq 0: \frac{27[\mathrm{N}_d]^4}{16\lambda_1\nu^3\varepsilon_0^3}\int_0^s\|\Arm(\u(t))\|^4_{4} \drm t + \left(L - \frac{\kappa}{(1+\delta)^2}\right)s - \uppi \geq R \right\},
\end{align}
and $\uptau_{R,\uppi,\delta,\kappa}=+\infty$ is the set is empty, that is, if 
\begin{align*}
	\frac{27[\mathrm{N}_d]^4}{16\lambda_1\nu^3\varepsilon_0^3}\int_0^s\|\Arm(\u(t))\|^4_{4} \drm t + \left(L - \frac{\kappa}{(1+\delta)^2}\right)s - \uppi < R, \;\;\; \text{ for all } s\geq 0.
\end{align*}
The parameter $\uppi$ is used to capture the dependence on the initial data $\u_0$, the external forcing $\f$, and other relevant constants.

By combining the definition of the stopping time $\uptau_{R,\uppi,\delta,\kappa}$ with Lemma \ref{lem-Difference}, the following result follows immediately.
\begin{corollary}\label{cor-stop-time}
	Under the Hypothesis of Lemma \ref{lem-Difference}, for any $\u_0$, $\Urm_0\in\Hbb$ and any $R$, $\uppi$, $\delta>0$, we get
	\begin{align*}
		\Ebb\left[ \mathbf{1}_{(\uptau_{R,\uppi,\delta,\kappa}=+\infty)}  \|\u(t)-\Urm(t)\|^2_{2} \right] \leq e^{R+\uppi - \frac{\delta\kappa}{(1+\delta)^2}t} \|\u_0-\Urm_0\|^2_2.
	\end{align*}
\end{corollary}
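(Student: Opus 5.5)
We apply Lemma \ref{lem-Difference} with the stopping time $\tau=\uptau_{R,\uppi,\delta,\kappa}$ and restrict attention to the event $\{\uptau_{R,\uppi,\delta,\kappa}=+\infty\}$. On that event $t\wedge\tau=t$ for every $t\ge0$. Since the integrand in \eqref{eqn-Difference-3} is nonnegative, inserting the indicator can only lower the expectation. Hence
\begin{align*}
\Ebb\left[\mathbf{1}_{(\uptau_{R,\uppi,\delta,\kappa}=+\infty)}\, e^{\varrho(t)}\|\u(t)-\Urm(t)\|_2^2\right]\le \|\u_0-\Urm_0\|_2^2,
\end{align*}
where $\varrho(t)=\left(\frac{\kappa}{1+\delta}-L\right)t-\frac{27[\mathrm{N}_d]^4}{16\lambda_1\nu^3\varepsilon_0^3}\int_0^t\|\Arm(\u(s))\|_4^4\drm s$ is the exponent from the proof of Lemma \ref{lem-Difference}.

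The key step is a pathwise lower bound for $\varrho(t)$ on this event. By \eqref{eqn-stopping-time}, $\uptau=+\infty$ means that for all $s\ge0$,
\begin{align*}
\frac{27[\mathrm{N}_d]^4}{16\lambda_1\nu^3\varepsilon_0^3}\int_0^s\|\Arm(\u(r))\|_4^4\drm r+\left(L-\frac{\kappa}{(1+\delta)^2}\right)s<R+\uppi.
\end{align*}
Taking $s=t$ gives $-\frac{27[\mathrm{N}_d]^4}{16\lambda_1\nu^3\varepsilon_0^3}\int_0^t\|\Arm(\u)\|_4^4\drm r> Lt-\frac{\kappa}{(1+\delta)^2}t-R-\uppi$. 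Substituting this into $\varrho(t)$, the $L$ terms cancel and we obtain
\begin{align*}
\varrho(t)> \left(\frac{\kappa}{1+\delta}-\frac{\kappa}{(1+\delta)^2}\right)t-R-\uppi=\frac{\delta\kappa}{(1+\delta)^2}t-R-\uppi .
\end{align*}
The identity used in the last step is $\frac{1}{1+\delta}-\frac{1}{(1+\delta)^2}=\frac{\delta}{(1+\delta)^2}$.

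Therefore $\mathbf{1}_{(\uptau=+\infty)}e^{\frac{\delta\kappa}{(1+\delta)^2}t-R-\uppi}\le \mathbf{1}_{(\uptau=+\infty)}e^{\varrho(t)}$ pointwise. Multiplying by $\|\u(t)-\Urm(t)\|_2^2$, taking expectations, and using the first display gives the claim once the deterministic factor $e^{R+\uppi-\frac{\delta\kappa}{(1+\delta)^2}t}$ is moved to the right-hand side.

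Nothing here is a real obstacle. The only point to check is that Lemma \ref{lem-Difference} holds for an arbitrary, possibly infinite, stopping time. It does, because \eqref{eqn-Difference-3} is stated for any stopping time $\tau$ and any $t$, and $t\wedge\tau$ is always finite. Measurability of the event $\{\uptau=+\infty\}$ follows from $\uptau$ being a stopping time, which in turn holds because the defining process is continuous and adapted.
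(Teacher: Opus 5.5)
Your proposal is correct and follows the same route as the paper's proof, which defers to \cite[Corollary 4.4]{BFLZ_2025_Arxiv}. Both apply Lemma \ref{lem-Difference} with $\tau=\uptau_{R,\uppi,\delta,\kappa}$, use $t\wedge\tau=t$ on $\{\uptau_{R,\uppi,\delta,\kappa}=+\infty\}$, and evaluate the defining inequality of the stopping time at $s=t$ to bound the exponent below by $\frac{\delta\kappa}{(1+\delta)^2}t-R-\uppi$, with the $L$-terms cancelling exactly as you show.
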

\begin{proof}
	See the proof of \cite[Corollary 4.4]{BFLZ_2025_Arxiv}.
\end{proof}

The result below establishes, for appropriately selected parameters $\uppi$, $\delta$, and $\kappa$, an estimate of $\mathbb{P}(\uptau_{R,\uppi,\delta,\kappa}<+\infty)$ as a function of $R$.

\begin{proposition}\label{prop-prob-esti}
Let $R>0$ and $\delta>0$ be arbitrary given, and condition \eqref{third-grade-paremeters-res} and Hypothesis \ref{hyp-noise} are satisfied. For the solution of system \eqref{STGF} with initial data $\u_0$, we consider the stopping time $\uptau_{R,\uppi,\delta,\kappa}$ defined in \eqref{eqn-stopping-time}. If
		\begin{align}\label{pi-3}
			\uppi  \geq \frac{27[\mathrm{N}_d]^4}{4\beta\lambda_1\nu^3\varepsilon_0^3} \|\u_0\|^2_2
		\end{align} 
		and 
		\begin{align}\label{sigma-3}
		\kappa \geq \left\{ \frac{27[\mathrm{N}_d]^4}{4\beta\lambda_1\nu^3\varepsilon_0^3} \Mrm + L \right\} (1+\delta)^2 
		\end{align} 
	where $\Mrm$ and $\mathrm{N}_d$ are given in \eqref{eqn-Mrm} and \eqref{Sobolev+Korn}, respectively,	then
		\begin{align*}
			\Pbb(\uptau_{R,\uppi,\delta,\kappa}<+\infty)
			\leq 
			\begin{cases}
				e^{-  \frac{\nu\lambda_1}{4K} \cdot \frac{4\beta\lambda_1\nu^3\varepsilon_0^3}{27[\mathrm{N}_d]^4} \cdot R}, & \text{ for } L=0; \\
				e^{- \min \left\{ \frac{\beta\lambda_1}{16L |\Ocal|}, \frac{\nu\lambda_1}{4K} \right\}\cdot \frac{4\beta\lambda_1\nu^3\varepsilon_0^3}{27[\mathrm{N}_d]^4} \cdot  R}, & \text{ for } L>0.
			\end{cases}
		\end{align*} 
\end{proposition}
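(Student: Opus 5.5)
The plan is to reduce the event $\{\uptau_{R,\uppi,\delta,\kappa}<+\infty\}$ to the tail event controlled by Lemma \ref{lem-prob-esti-2} with $T=0$. Set $\gamma:=\frac{27[\mathrm{N}_d]^4}{16\lambda_1\nu^3\varepsilon_0^3}$. Then $\frac{27[\mathrm{N}_d]^4}{4\beta\lambda_1\nu^3\varepsilon_0^3}=\frac{4\gamma}{\beta}$. With this notation, \eqref{pi-3} reads $\uppi\geq \frac{4\gamma}{\beta}\|\u_0\|_2^2$, and \eqref{sigma-3} gives
\[
L-\frac{\kappa}{(1+\delta)^2}\leq -\frac{4\gamma}{\beta}\Mrm .
\]

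On $\{\uptau_{R,\uppi,\delta,\kappa}<+\infty\}$ there is some $s\ge0$ with
\[
\gamma\int_0^s\|\Arm(\u(t))\|_4^4\,\drm t+\Big(L-\frac{\kappa}{(1+\delta)^2}\Big)s-\uppi\geq R .
\]
Using the two parameter bounds, this implies
\[
\gamma\int_0^s\|\Arm(\u(t))\|_4^4\,\drm t-\frac{4\gamma}{\beta}\Mrm s-\frac{4\gamma}{\beta}\|\u_0\|_2^2\ge R .
\]
Multiplying by $\frac{\beta}{4\gamma}$ and adding the nonnegative term $\|\u(s)\|_2^2$ gives
\[
\|\u(s)\|_2^2+\frac{\beta}{4}\int_0^s\|\Arm(\u(t))\|_4^4\,\drm t-\|\u_0\|_2^2-\Mrm s\geq \frac{\beta}{4\gamma}R=\frac{4\beta\lambda_1\nu^3\varepsilon_0^3}{27[\mathrm{N}_d]^4}R .
\]
Hence $\{\uptau<\infty\}$ is contained in the event that the supremum over $s\ge0$ of the left side is at least $\frac{\beta}{4\gamma}R$.

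Applying \eqref{eqn-Prob-Est-varpi=3} with $T=0$ and $R$ replaced by $\frac{\beta}{4\gamma}R$ then gives exactly the claimed bounds, in the two cases $L=0$ and $L>0$.

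The only delicate point is bookkeeping of constants: the factor $\frac{\beta}{4}$ in Lemma \ref{lem-prob-esti-2} has to match $\gamma$ after the rescaling by $\frac{4\gamma}{\beta}$. No stochastic analysis is needed beyond that lemma.
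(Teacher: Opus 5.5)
Your proposal is correct and follows the paper's proof essentially step for step. Both arguments use \eqref{pi-3} and \eqref{sigma-3} to show that $\{\uptau_{R,\uppi,\delta,\kappa}<+\infty\}$ is contained in the event $\bigl\{\sup_{s\geq 0}\bigl[\frac{\beta}{4}\int_0^s\|\Arm(\u(t))\|_4^4\,\drm t-\Mrm s-\|\u_0\|_2^2\bigr]\geq \frac{4\beta\lambda_1\nu^3\varepsilon_0^3}{27[\mathrm{N}_d]^4}R\bigr\}$, and then apply Lemma \ref{lem-prob-esti-2} with $T=0$; your explicit addition of the nonnegative term $\|\u(s)\|_2^2$ (and writing $\Mrm s$ where the paper writes $\Mrm t$) just makes precise a step the paper leaves implicit.
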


\begin{proof}
	The proof follows by the similar lines as in the proof of \cite[Proposition 4.3]{BZ_DCDS} and \cite[Proposition 4.5]{BFLZ_2025_Arxiv}. But for the clarity related to the condition on the parameters, we provide the proof. 
	
	Let us introduce the set
	\begin{align}
		\mathfrak{F}_{R,\uppi,\delta,\kappa} := \left\{ \sup_{s \geq 0}\left[ \frac{27[\mathrm{N}_d]^4}{16\lambda_1\nu^3\varepsilon_0^3}\int_0^s\|\Arm(\u(t))\|^4_{4} \drm t + \left(L - \frac{\kappa}{(1+\delta)^2}\right)s - \uppi \right] \geq R  \right\},
	\end{align}
	so that $\Pbb(\uptau_{R,\uppi,\delta,\kappa} < \infty) \leq \Pbb (\mathfrak{F}_{R,\uppi,\delta,\kappa})$. The compliment of the set is given by
%	\begin{align}
%		\mathfrak{F}^c_{R,\uppi,\delta,\kappa} = \left\{ \frac{27[\mathrm{N}_d]^4}{16\lambda_1\nu^3\varepsilon_0^3} \int_0^s\|\Arm(\u(t))\|^4_{4} \drm t + \left(L - \frac{\kappa}{(1+\delta)^2}\right)s - \uppi < R, \;\;\; \text{ for all } r\geq 0\right\}.
%	\end{align}
	\begin{align}
		\mathfrak{F}^c_{R,\uppi,\delta,\kappa} = \left\{ \frac{\beta}{4}\int_0^s\|\Arm(\u(t))\|^4_{4} \drm t <  \frac{4\beta\lambda_1\nu^3\varepsilon_0^3}{27[\mathrm{N}_d]^4} \left[\left( \frac{\kappa}{(1+\delta)^2} - L \right)s + \uppi + R\right], \;\;\; \text{ for all } s\geq 0\right\}.
	\end{align}
From the assumption \eqref{sigma-3}, we have 
	\begin{align}
		 %K  + \frac{1}{4\beta}\left(\frac{\widetilde{K}}{\lambda_1}+1\right)^2|\Ocal|  +  \frac{108}{\beta^3} |\Ocal| |\alpha|^4 	+  \|\f\|^2_{\Vbb^{*}} = 
		 \Mrm \leq  \frac{4\beta\lambda_1\nu^3\varepsilon_0^3}{27[\mathrm{N}_d]^4} \left( \frac{\kappa}{(1+\delta)^2} - L \right).
	\end{align}
Let us choose $\uppi>0$ as in \eqref{pi-3} and take $\bar{R} =  \frac{4\beta\lambda_1\nu^3\varepsilon_0^3}{27[\mathrm{N}_d]^4} R$. Choosing these parameters results in
\begin{align*}
	\mathfrak{F}^c_{R,\uppi,\delta,\kappa} \supseteq \left\{ \frac{\beta}{4}\int_0^s\|\Arm(\u(t))\|^4_{4} \drm t <  \Mrm t + \|\u_0\|^2_2 + \bar{R}, \;\;\; \text{ for all } s\geq 0\right\},
\end{align*}
that is,
\begin{align*}
	\mathfrak{F}_{R,\uppi,\delta,\kappa} \subseteq \left\{ \sup_{s \geq 0} \left[ \frac{\beta}{4}\int_0^s\|\Arm(\u(t))\|^4_{4} \drm t -  \Mrm t  - \|\u_0\|^2_2\right] \geq \bar{R} \right\}.
\end{align*}
By the application of Lemma \ref{lem-prob-esti-2}, we infer that 
\begin{align*}
	\Pbb (\mathfrak{F}_{R,\uppi,\delta,\kappa}) \leq \begin{cases}
		e^{-  \frac{\nu\lambda_1}{4K} \cdot \frac{4\beta\lambda_1\nu^3\varepsilon_0^3}{27[\mathrm{N}_d]^4} \cdot R}, & \text{ for } L=0; \\
		e^{- \min \left\{ \frac{\beta\lambda_1}{16L |\Ocal|}, \frac{\nu\lambda_1}{4K} \right\}\cdot \frac{4\beta\lambda_1\nu^3\varepsilon_0^3}{27[\mathrm{N}_d]^4} \cdot  R}, & \text{ for } L>0.
	\end{cases}
\end{align*}
Hence, in view of the fact that $\Pbb(\uptau_{R,\uppi,\delta,\kappa} < \infty) \leq \Pbb (\mathfrak{F}_{R,\uppi,\delta,\kappa})$, we complete the proof.
\end{proof}

In what follows, we state and prove the main result of this section.

\begin{theorem}\label{MT-Critical}
	Suppose that condition \eqref{third-grade-paremeters-res},  Hypothesis \ref{hyp-noise} and assumption \eqref{eqn-data-inter} are satisfied. Assume that $\u$  and $\Urm$ are the solutions to the systems \eqref{STGF} and \eqref{STGF-CDA}, respectively. If $\kappa$ and $\h$ are such that 
		\begin{equation}\label{condition-on-sigma-6}
			  \frac{27[\mathrm{N}_d]^4}{4\beta\lambda_1\nu^3\varepsilon_0^3} \Mrm 	 + L  <  \kappa  \leq   \frac{  \nu \varepsilon_0 }{c_0 \h^2 }, 
		\end{equation}
		then $\mathbb{E}\left[\|\u(t)-\Urm(t)\|_2^2\right] \to 0$ exponentially fast as $t \to + \infty$, where $\Mrm$ is given by \eqref{eqn-Mrm}.
\end{theorem}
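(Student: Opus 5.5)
The plan is to split the mean-square error according to whether the stopping time $\uptau_{R,\uppi,\delta,\kappa}$ of \eqref{eqn-stopping-time} is finite. On the event $\{\uptau_{R,\uppi,\delta,\kappa}=+\infty\}$ I use Corollary \ref{cor-stop-time}. On the complementary event I use the tail bound of Proposition \ref{prop-prob-esti} together with the uniform fourth-moment bounds of Lemmas \ref{lem-pth-moments} and \ref{lem-pth-moments-Urm}. The radius $R$ will be chosen proportional to $t$, so that both contributions decay exponentially.

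\emph{Fixing the parameters.} Since the lower inequality in \eqref{condition-on-sigma-6} is strict, I first fix $\delta>0$ small enough that
\begin{align*}
\kappa \geq \left\{ \frac{27[\mathrm{N}_d]^4}{4\beta\lambda_1\nu^3\varepsilon_0^3} \Mrm + L \right\} (1+\delta)^2 .
\end{align*}
This is exactly \eqref{sigma-3}. The upper inequality in \eqref{condition-on-sigma-6} is \eqref{condition-on-sigma-4}, so Lemma \ref{lem-Difference} and Corollary \ref{cor-stop-time} are available. I then fix $\uppi$ equal to the right-hand side of \eqref{pi-3}. Proposition \ref{prop-prob-esti} then gives $\Pbb(\uptau_{R,\uppi,\delta,\kappa}<+\infty)\le e^{-c_*R}$ for every $R>0$, where $c_*>0$ is the explicit constant in that proposition and does not depend on $R$ or $t$.

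\emph{Splitting the expectation.} For fixed $t>0$ and any $R>0$, I write
\begin{align*}
\Ebb\left[\|\u(t)-\Urm(t)\|_2^2\right] = \Ebb\left[\mathbf{1}_{(\uptau_{R,\uppi,\delta,\kappa}=+\infty)}\|\u(t)-\Urm(t)\|_2^2\right] + \Ebb\left[\mathbf{1}_{(\uptau_{R,\uppi,\delta,\kappa}<+\infty)}\|\u(t)-\Urm(t)\|_2^2\right].
\end{align*}
By Corollary \ref{cor-stop-time}, the first term is at most $e^{R+\uppi-\frac{\delta\kappa}{(1+\delta)^2}t}\|\u_0-\Urm_0\|_2^2$. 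For the second term I apply the Cauchy--Schwarz inequality. Together with $\|\u-\Urm\|_2^4\le 8(\|\u\|_2^4+\|\Urm\|_2^4)$ and Lemmas \ref{lem-pth-moments} and \ref{lem-pth-moments-Urm}, this bounds it by
\begin{align*}
\Pbb(\uptau_{R,\uppi,\delta,\kappa}<+\infty)^{\frac12}\left(\sup_{s\ge0}\Ebb\left[\|\u(s)-\Urm(s)\|_2^4\right]\right)^{\frac12} \le C\left(1+\|\u_0\|_2^2+\|\Urm_0\|_2^2\right)e^{-\frac{c_*}{2}R},
\end{align*}
with $C$ independent of $t$ and $R$.

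\emph{Choosing $R$.} Since the corollary and the proposition hold for every $R>0$, for each $t$ I may take $R=R(t):=\theta t$ with $\theta:=\frac{\delta\kappa}{2(1+\delta)^2}$. This gives
\begin{align*}
\Ebb\left[\|\u(t)-\Urm(t)\|_2^2\right]\le e^{\uppi}\|\u_0-\Urm_0\|_2^2\,e^{-\theta t} + C\left(1+\|\u_0\|_2^2+\|\Urm_0\|_2^2\right)e^{-\frac{c_*\theta}{2}t},
\end{align*}
which decays exponentially with rate $\theta\min\{1,c_*/2\}$. The main point needing care is that a different stopping time is used for each $t$. This is harmless, because each bound is applied for a fixed $t$ and the constants $\uppi$, $\delta$, $c_*$ and $C$ do not depend on $R$. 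The other delicate point is checking that the uniform-in-time fourth moments in Lemma \ref{lem-pth-moments-Urm} remain valid under \eqref{condition-on-sigma-6}, which I take from that lemma as stated.
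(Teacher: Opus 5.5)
Your proposal is correct and follows the paper's proof essentially step for step. You use the same split over $\{\uptau_{R,\uppi,\delta,\kappa}=+\infty\}$ and its complement, Corollary \ref{cor-stop-time} on the first event, Cauchy--Schwarz with Proposition \ref{prop-prob-esti} and Lemmas \ref{lem-pth-moments}--\ref{lem-pth-moments-Urm} on the second, and the same choice $R=\frac{\delta\kappa}{2(1+\delta)^2}t$. You are slightly more explicit than the paper, which leaves implicit both the use of the strict lower inequality in \eqref{condition-on-sigma-6} to fix $\delta>0$ so that \eqref{sigma-3} holds and the choice of $\uppi$ via \eqref{pi-3}.
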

\begin{proof}
	Making use of H\"older's inequality, Corollary \ref{cor-stop-time} and Lemmas \ref{lem-pth-moments}-\ref{lem-pth-moments-Urm}, we find, for any $R, \uppi, \delta>0$ and $0<\kappa \leq \frac{\nu\varepsilon_0}{c_0 \h^2 }$, 
	\begin{align}\label{MT-1-1}
		\mathbb{E} \left[ \|\u(t)-\Urm(t)\|^2_2\right]
		& = \mathbb{E} \left[\mathbf{1}_{(\uptau_{R, \uppi, \delta, \kappa =+\infty  })} \|\u(t)-\Urm(t)\|^2_2\right] + \mathbb{E} \left[\mathbf{1}_{(\uptau_{R, \uppi, \delta, \kappa <+\infty})} \|\u(t)-\Urm(t)\|^2_2\right]
		\notag \\
		& \leq e^{ R + \uppi - \frac{\delta \kappa}{(1+\delta)^2} t} \|\u_0-\Urm_0\|^2_2 + \left( \mathbb{P}(\uptau_{R, \uppi, \delta, \kappa}<+\infty)\right)^{\frac12} \left(\mathbb{E}\left[\|\u(t)-\Urm(t)\|^4_2\right] \right)^{\frac12}
		\notag\\
		& \leq
		C\left(1+\|\u_0\|^2_2 +  \|\Urm_0\|^2_2 \right)\left(\left( \mathbb{P}(\uptau_{R, \uppi, \delta, \kappa}<+\infty)\right)^{\frac12}+ e^{ R + \uppi - \frac{\delta\kappa}{(1+\delta)^2} t}\right),
	\end{align}
	where $C>0$ is a constant independent of $t$. We proceed by fixing suitable parameters $R$, $\uppi$, $\delta$, and $\kappa$, and then exploiting the previously established bounds for $\uptau_{R,\uppi,\delta,\kappa}$. From the Proposition \ref{prop-prob-esti}, we gain
	\begin{align}\label{MT-1-2}
		(\mathbb{P}(\uptau_{R, \uppi, \delta, \kappa}<+\infty))^{\frac12} 
		\leq 
		\begin{cases}
			e^{-C_1 R}, & \text{ for } L=0;\\
			e^{-C_2 R}, & \text{ for } L>0,
		\end{cases} 
	\end{align}
	where the constants $C_1>0$ and $C_2>0$ are independent of $R$ and $t$. Now combining \eqref{MT-1-1} and \eqref{MT-1-2}, and choosing $R= \frac{\delta\kappa}{2(1+\delta)^2}t$, for each $t > 0$, we immediately complete the proof. 
\end{proof}

\section{Pathwise convergence}\label{Sec5} 

In Theorem \ref{MT-Critical}, we established that $\lim\limits_{t \to +\infty} \Ebb \left[ \|\u(t)-\Urm(t)\|_2^2 \right] = 0.$ As a consequence, there exists a subsequence $\{t_{n}\}_{n\in\N}$ such that $t_n \to +\infty$ as $n \to +\infty$, along which $\|\u(t)-\Urm(t)\|_2^2$ converges $\Pbb$-a.s. Moreover, as shown in \cite[Corollary 4.6]{BFLZ_2025_Arxiv}, for any sequence $\{t_n\}_{n\in\mathbb{N}}$ with $t_n \to +\infty$ as $n \to +\infty$, one can obtain that $ \|\u(t_n)-\Urm(t_n) \|_2^2 \to 0$ $\Pbb$-a.s. We emphasize, however, that this result alone is not sufficient to conclude pathwise convergence, that is,
\[
\lim\limits_{t \to +\infty} \|\u(t)-\Urm(t) \|_2^2 =0,\ \Pbb\text{-a.s.}
\]

\begin{corollary}
\label{pathwise_mult}
Under the same assumption as in Theorem \ref{MT-Critical}, for any sequence of times $\{t_{n}\}_{n\in\N}$ such that $t_n \to +\infty$ as $n \to +\infty$, we get 
 \[
\mathbb{P}\left( \lim_{ n \to + \infty}\|\u(t_n)-\Urm(t_n)\|^2_2 =0\right) = 1.
 \]
\end{corollary}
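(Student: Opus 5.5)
The plan is to combine the exponential rate from the proof of Theorem \ref{MT-Critical} with a Borel--Cantelli argument. I do not expect convergence in mean alone to suffice for an arbitrary sequence. The difficulty is that the sequence $t_n\to+\infty$ may be arbitrarily slow, so $\sum_n e^{-\gamma t_n}$ may diverge. I would therefore pass through a quantity that is uniform in time, rather than through the individual values $t_n$.

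\textbf{Step 1 (uniform-in-time bound on the good set).} Fix $\delta>0$ and $\kappa$ satisfying \eqref{condition-on-sigma-6}, and take $\uppi$ as in \eqref{pi-3}. For each $R=m\in\N$, let $\Omega_m:=\{\uptau_{m,\uppi,\delta,\kappa}=+\infty\}$. The sets $\Omega_m$ increase in $m$, since the stopping time increases with $R$. By Proposition \ref{prop-prob-esti}, $\Pbb(\Omega_m^c)\le e^{-Cm}\to0$, so $\Pbb\left(\bigcup_m\Omega_m\right)=1$.

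On $\Omega_m$ the integrating factor in Lemma \ref{lem-Difference} (with $\delta$ replaced suitably) satisfies $\varrho(s)\ge \frac{\delta\kappa}{(1+\delta)^2}s-m-\uppi$ for all $s\ge0$. I would then revisit \eqref{Diff-7}. The process
\begin{align*}
Z(t):=e^{\varrho(t\wedge\uptau)}\|\w(t\wedge\uptau)\|_2^2
\end{align*}
is dominated by $\|\w(0)\|_2^2$ plus a nonnegative local martingale, namely the stochastic integral added to the nonnegative drift term. Hence it is a nonnegative supermartingale. Doob's maximal inequality then gives
\begin{align*}
\Pbb\left(\sup_{t\ge0}Z(t)\ge\lambda\right)\le \frac{\|\w(0)\|_2^2}{\lambda}.
\end{align*}
Consequently $\sup_{t\ge0}Z(t)<\infty$ a.s., even though no exponential $\varrho$-weighted bound is needed for this.

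\textbf{Step 2 (pathwise decay).} On $\Omega_m\cap\{\sup_t Z(t)<\infty\}$ we have $\uptau=+\infty$. Therefore
\begin{align*}
\|\w(t)\|_2^2\le e^{m+\uppi-\frac{\delta\kappa}{(1+\delta)^2}t}\sup_{s\ge0}Z(s)\longrightarrow0 \quad\text{as } t\to+\infty.
\end{align*}
This gives convergence along all $t\to\infty$, and in particular along any sequence $t_n$. Taking the union over $m$ yields an event of full probability. The main point to check here is the supermartingale claim. Rigorously, the local martingale plus a constant is a nonnegative local martingale, hence a supermartingale by Fatou. $Z$ is bounded by it, and Doob's inequality applies to that dominating process, so this step holds up.

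\textbf{Fallback.} If I wanted to avoid the maximal inequality, I would instead use \eqref{MT-1-1}. That bound gives $\Ebb\|\w(t)\|_2^2\le Ce^{-\gamma t}$. Applying it along the integer times $k$ and using Chebyshev with Borel--Cantelli gives a.s. convergence at integers. Interpolating to arbitrary $t_n$ would then require control of $\sup_{t\in[k,k+1]}\|\w(t)\|_2^2$ through the BDG inequality and \eqref{Diff-6}. This is the obstacle that the supermartingale route circumvents, so I would pursue Steps 1--2 first.
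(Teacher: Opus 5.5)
Your argument is correct, but it follows a different route from the paper. The paper gives no proof of its own: it cites \cite[Corollary 4.6]{BFLZ_2025_Arxiv} and presents the statement as a consequence of the mean-square decay in Theorem~\ref{MT-Critical}. As you point out, passing from $\Ebb\|\u(t)-\Urm(t)\|_2^2\le Ce^{-\gamma t}$ to almost sure convergence by Chebyshev and Borel--Cantelli only handles sequences with $\sum_n e^{-\gamma t_n}<\infty$. You instead work pathwise, in three steps. First, by \eqref{Diff-7}, $e^{\varrho(t)}\|\w(t)\|_2^2$ is dominated by the nonnegative continuous local martingale $\|\w(0)\|_2^2+2\int_0^t e^{\varrho(s)}\big(\w(s),[\Grm(s,\Urm(s))-\Grm(s,\u(s))]\,\drm\Wrm(s)\big)$, whose supremum over $[0,\infty)$ is a.s. finite by the maximal inequality for nonnegative supermartingales. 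Second, on $\{\uptau_{m,\uppi,\delta,\kappa}=+\infty\}$ one has $\varrho(s)\ge \frac{\delta\kappa}{(1+\delta)^2}s-m-\uppi$. Third, by Proposition~\ref{prop-prob-esti} these events exhaust $\Omega$ up to a null set. In effect this is the proof of Lemma~\ref{data-estimate-1} and Theorem~\ref{pathwise_data_ass}, with the martingale term (absent for additive noise) now controlled by an a.s. finite random constant.

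Your route buys a lot. It gives $\|\u(t)-\Urm(t)\|_2^2\le C(\omega)e^{-\frac{\delta\kappa}{(1+\delta)^2}t}$ for all $t\ge0$. That is pathwise exponential convergence along the whole continuum, even for multiplicative noise. It is strictly stronger than the corollary and goes beyond Section~\ref{Sec5}, where pathwise convergence is proved only for additive noise. It also needs neither Theorem~\ref{MT-Critical} nor the fourth-moment bounds of Lemmas~\ref{lem-pth-moments}--\ref{lem-pth-moments-Urm}. The expectation route, in turn, yields a deterministic decay rate for $\Ebb\|\u(t)-\Urm(t)\|_2^2$, which your bound does not.

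Three small things to tidy up. (i) Take $\delta$ small enough that \eqref{sigma-3} holds; this is possible because of the strict inequality in \eqref{condition-on-sigma-6}, and without it Proposition~\ref{prop-prob-esti} does not apply. (ii) The sentence ``dominated by \dots, hence it is a nonnegative supermartingale'' is not valid as written; use the version in your Step~2, where Doob's inequality is applied to the dominating process. (iii) Stopping $Z$ at $\uptau$ is unnecessary, since \eqref{Diff-7} holds for all $t$.
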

\begin{proof}
See the proof of \cite[Corollary 4.6]{BFLZ_2025_Arxiv}.
\end{proof}

We now turn to the pathwise analysis, in which convergence results are understood in the $\Pbb$-a.s. sense. Throughout this part, we assume that $\Grm$ is independent of $\u$.

Let us first provide an auxiliary lemma which will be used in sequel.

\begin{lemma}
\label{data-estimate-1}
Let $\Grm\in \mathcal{L}_{\Qrm}(\Hbb)$ and let $\Rrm_{\h}$ satisfy \eqref{eqn-data-inter}. 
Let $\u$ and $\Urm$ be the solutions to the systems \eqref{STGF} and \eqref{STGF-CDA}, respectively. If $\kappa$ and $\h$ satisfy
\begin{equation}
\label{condi-1-mu}
0<\kappa \leq \frac{  \nu \varepsilon_0 }{c_0 \h^2 } 
\end{equation}
 where $c_0$ is the constant that appears in estimate \eqref{eqn-data-inter}, then $\Pbb$-a.s.  we have
\begin{align}\label{error-u-U}
\left\| \u(t)-\Urm(t) \right\|_{2}^2 &\leq \left\|\u_0 - \Urm_0 \right\|_{2}^2 \cdot  \exp \left(-\kappa t + \displaystyle \frac{27[\mathrm{N}_d]^4}{16\lambda_1\nu^3\varepsilon_0^3} \int_{0}^t   \|\Arm(\u(s))\|_{4}^4 \drm s   \right), \qquad \text{ for any }t>0.
\end{align} 
\end{lemma}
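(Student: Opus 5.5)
With additive noise the stochastic terms cancel in the difference, so I plan to work pathwise on the difference equation and close with a deterministic Gronwall argument.

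Set $\w:=\Urm-\u$. Since $\Grm$ does not depend on the state, the noise terms $\Grm\,\drm\Wrm$ in \eqref{STGF} and \eqref{STGF-CDA} are identical and cancel. Hence, $\Pbb$-a.s., $\w$ solves the random (pathwise deterministic) equation
\begin{align*}
\frac{\drm \w}{\drm t} + \nu\Acal\w + \B(\Urm,\w)+\B(\w,\u) + \alpha(\J(\Urm)-\J(\u)) + \beta(\K(\Urm)-\K(\u)) = -\kappa\Pcal\Rrm_{\h}(\w),
\end{align*}
in $\X^*$, with $\w(0)=\Urm_0-\u_0$. Both $\u$ and $\Urm$ lie in $\Lrm^2_{\mathrm{loc}}(0,\infty;\Vbb)\cap\Lrm^4_{\mathrm{loc}}(0,\infty;\mathbb{W}^{1,4}_0(\Ocal))$. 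So $\w\in\Lrm^4_{\mathrm{loc}}(\X)$ and $\drm\w/\drm t\in\Lrm^{4/3}_{\mathrm{loc}}(\X^*)$, and the Lions--Magenes type energy equality gives $\frac{\drm}{\drm t}\|\w\|_2^2=2\langle \w,\drm\w/\drm t\rangle$ for a.e. $t$. Justifying this energy identity in the $\X$--$\X^*$ duality is the only delicate point. I will treat it as the standard argument already implicit in the well-posedness theory; alternatively one can read it off from the It\^o formula used in Lemma \ref{lem-Difference}, where the quadratic variation and martingale terms now vanish identically.

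Next I repeat, pathwise, the estimates \eqref{Diff-2}--\eqref{Diff-5} from the proof of Lemma \ref{lem-Difference}, now with $L=0$ and no martingale term.
\begin{itemize}
\item The term $\alpha(\J(\Urm)-\J(\u))$ is bounded by \eqref{Diff-2}. It costs $2\nu(1-\varepsilon_0)\|\w\|_{\Vbb}^2$ plus a $\beta(1-\varepsilon_0)$-multiple of $\int|\Arm(\w)|^2(|\Arm(\Urm)|^2+|\Arm(\u)|^2)$.
\item That multiple is absorbed by the good $\beta$-term in \eqref{Diff-3}.
\item The convective term and the interpolation error are handled by \eqref{Diff-5}, using \eqref{Sobolev+Korn} and \eqref{eqn-data-inter}.
\end{itemize}
This yields the pathwise differential inequality
\begin{align*}
\frac{\drm}{\drm t}\|\w\|_2^2 + (\nu\varepsilon_0-\kappa c_0\h^2)\|\w\|_{\Vbb}^2 + \Big(\kappa - \frac{27[\mathrm{N}_d]^4}{16\lambda_1\nu^3\varepsilon_0^3}\|\Arm(\u)\|_4^4\Big)\|\w\|_2^2 \le 0 .
\end{align*}
Under \eqref{condi-1-mu} the $\Vbb$-term is nonnegative and can be dropped.

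Finally, I multiply by the integrating factor $\exp\big(\kappa t-\frac{27[\mathrm{N}_d]^4}{16\lambda_1\nu^3\varepsilon_0^3}\int_0^t\|\Arm(\u(s))\|_4^4\drm s\big)$. This factor is absolutely continuous because $\|\Arm(\u)\|_4^4\in\Lrm^1_{\mathrm{loc}}$ $\Pbb$-a.s. The product is then nonincreasing, and integrating from $0$ to $t$ gives \eqref{error-u-U} for every $t>0$, $\Pbb$-a.s. This is Gronwall's inequality, and it is the same computation as \eqref{Diff-7} with $\delta=0$, $L=0$ and the stochastic integral absent.
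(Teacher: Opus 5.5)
Your proposal is correct and follows the paper's proof: with state-independent noise the It\^o correction and martingale terms in \eqref{Diff-6} vanish ($L=0$), which leaves the pathwise inequality \eqref{Diff-13}, and Gronwall's inequality then gives \eqref{error-u-U}. Your integrating-factor argument (that is, \eqref{Diff-7} with $\delta=0$, $L=0$) is the right way to read that Gronwall step, because the coefficient $\kappa-\frac{27[\mathrm{N}_d]^4}{16\lambda_1\nu^3\varepsilon_0^3}\|\Arm(\u(s))\|_4^4$ can be negative, so the integral inequality \eqref{Diff-13} anchored at time $0$ would not suffice on its own.
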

\begin{proof}
	Since $\Grm$ is independent of $\u$ and consequently $L=0$, we infer from \eqref{Diff-6} that $\w:= \u-\Urm$ satisfies
	\begin{align}\label{Diff-13}
		& \|\w(t)\|^2_2 + (\nu\varepsilon_0 -\kappa c_0 \h^2) \int_0^t \|\w(s)\|^2_{\Vbb}\,\drm s  +  \int_0^t  \left(\kappa - \frac{27[\mathrm{N}_d]^4}{16\lambda_1\nu^3\varepsilon_0^3} \|\Arm(\u(s))\|_{4}^4  \right) \|\w(s)\|^2_2 \,\drm s 
		  \leq  \|\w(0)\|^2_2.
	\end{align}
Hence, an application of Gronwall's inequality completes the proof.
\end{proof}

Now, we establish the main result of this subsection.

\begin{theorem}\label{pathwise_data_ass}
	Let $\Grm\in \mathcal{L}_{\Qrm}(\Hbb)$ and let $\Rrm_{\h}$ satisfy \eqref{eqn-data-inter}. Let $\u$ and $\Urm$ be the solutions to the systems \eqref{STGF} and \eqref{STGF-CDA}, respectively. If $\kappa$ and $\h$ satisfy
	\begin{equation}\label{cond_mu}
		\frac{27[\mathrm{N}_d]^4}{4\beta\lambda_1\nu^3\varepsilon_0^3} \left[\|\Grm\|^2_{\mathcal{L}_{\Qrm}}  + \left(\frac{1}{4\beta}  +  \frac{27\alpha^4 }{4\beta^3}\right) |\Ocal| 
		+  \|\f\|^2_{\Vbb^{*}}\right] <   \kappa \leq   \frac{  \nu \varepsilon_0 }{c_0 \h^2 }, 
	\end{equation}
	then
	$\|\u(t)-\Urm(t)\|_2 \to 0$ exponentially fast as $t \to + \infty$, $\mathbb{P}$-a.s.
\end{theorem}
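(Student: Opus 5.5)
Starting from Lemma \ref{data-estimate-1}, it is enough to show that, $\Pbb$-a.s., the time average $\frac{1}{t}\int_0^t \|\Arm(\u(s))\|_4^4\,\drm s$ is eventually below a threshold. More precisely, I will show that almost surely there is a random $T(\omega)$ and a constant $\gamma>0$ such that
\begin{align*}
-\kappa t + \frac{27[\mathrm{N}_d]^4}{16\lambda_1\nu^3\varepsilon_0^3}\int_0^t \|\Arm(\u(s))\|_4^4\,\drm s \leq -\gamma t \qquad \text{for all } t\geq T(\omega).
\end{align*}
Together with \eqref{error-u-U}, this gives $\|\u(t)-\Urm(t)\|_2^2\leq \|\u_0-\Urm_0\|_2^2 e^{-\gamma t}$ for $t\ge T(\omega)$, which is the claimed pathwise exponential convergence.

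To control $\int_0^t\|\Arm(\u)\|_4^4$, I use the Itô energy inequality from the proof of Lemma \ref{lem-prob-esti-2}. With additive noise we have $\widetilde K=0$, $L=0$ and $K=\|\Grm\|^2_{\mathcal{L}_{\Qrm}}$, so $\Mrm$ in \eqref{eqn-Mrm} is exactly the bracket appearing in \eqref{cond_mu}. That computation gives, $\Pbb$-a.s. and for all $t$,
\begin{align*}
\frac{\beta}{4}\int_0^t\|\Arm(\u(s))\|_4^4\,\drm s \leq \|\u_0\|_2^2 + \Mrm t + \mathcal{M}(t) - 2\nu\lambda_1\int_0^t\|\u(s)\|_2^2\,\drm s,
\end{align*}
where $\mathcal{M}(t)=2\int_0^t(\Grm\,\drm\Wrm,\u)$. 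Since $[\mathcal{M}](t)\le 4K\int_0^t\|\u\|_2^2\,\drm s$, I can absorb the martingale using $\mathcal{M}(t)-\frac{\nu\lambda_1}{4K}[\mathcal{M}](t)\le \mathcal{M}(t)-\nu\lambda_1\int_0^t\|\u\|_2^2$. Alternatively, I can apply Lemma \ref{lem-prob-esti-2} directly with $T=0$, which gives
\begin{align*}
\Pbb\Bigl(\sup_{t\ge0}\Bigl[\tfrac{\beta}{4}\int_0^t\|\Arm(\u)\|_4^4\,\drm s - \|\u_0\|_2^2-\Mrm t\Bigr]\ge R\Bigr)\le e^{-\frac{\nu\lambda_1}{4K}R}.
\end{align*}
Letting $R\to\infty$, I obtain that $\Pbb$-a.s. there is a finite random $R(\omega)$ with $\frac{\beta}{4}\int_0^t\|\Arm(\u)\|_4^4\,\drm s\le \|\u_0\|_2^2+\Mrm t+R(\omega)$ for all $t\ge0$. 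The point of using the supremum over all $t$ is that it holds uniformly in time, so no Borel--Cantelli argument along a discrete sequence of times is needed.

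Substituting this bound into \eqref{error-u-U} yields
\begin{align*}
\|\u(t)-\Urm(t)\|_2^2\le \|\u_0-\Urm_0\|_2^2\exp\Bigl(-\Bigl(\kappa-\tfrac{27[\mathrm{N}_d]^4}{4\beta\lambda_1\nu^3\varepsilon_0^3}\Mrm\Bigr)t + \tfrac{27[\mathrm{N}_d]^4}{4\beta\lambda_1\nu^3\varepsilon_0^3}\bigl(\|\u_0\|_2^2+R(\omega)\bigr)\Bigr).
\end{align*}
By the strict lower bound on $\kappa$ in \eqref{cond_mu}, $\gamma:=\kappa-\frac{27[\mathrm{N}_d]^4}{4\beta\lambda_1\nu^3\varepsilon_0^3}\Mrm>0$. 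The prefactor is almost surely finite, and this completes the argument.

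The main obstacle is checking that the constant in \eqref{cond_mu} matches the energy estimate for additive noise. That estimate uses Young's inequality on the $\alpha$-term and on the forcing term, and $\widetilde K=0$ reduces $\frac{1}{4\beta}(\frac{\widetilde K}{\lambda_1}+1)^2$ to $\frac1{4\beta}$, as required. The second point to handle is the case $K=\|\Grm\|^2_{\mathcal{L}_{\Qrm}}=0$, where the rate $\frac{\nu\lambda_1}{4K}$ degenerates. In that case the noise vanishes and the bound holds deterministically with $R(\omega)=0$.
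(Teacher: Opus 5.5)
Your argument is correct and is essentially the paper's proof. Both read off from Lemma \ref{lem-prob-esti-2} (with $K=\|\Grm\|^2_{\mathcal{L}_{\Qrm}}$ and $\widetilde{K}=L=0$, so that $\Mrm$ is exactly the bracket in \eqref{cond_mu}) an almost sure linear-in-time bound on $\int_0^t\|\Arm(\u(s))\|_4^4\,\drm s$, and insert it into \eqref{error-u-U}. The only difference is that you keep the random constant $R(\omega)$ uniformly in $t$, which gives the rate $\kappa-\frac{27[\mathrm{N}_d]^4}{4\beta\lambda_1\nu^3\varepsilon_0^3}\Mrm$ with a random prefactor. The paper instead passes through the $\limsup$ of the time average and gives up an arbitrary slack $\kappa^*$ after a random time $t_0$.

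There is one small slip in your aside. Since $[\mathcal{M}](t)\le 4K\int_0^t\|\u(s)\|_2^2\,\drm s$, the valid inequality is $\mathcal{M}(t)-\nu\lambda_1\int_0^t\|\u(s)\|_2^2\,\drm s\le \mathcal{M}(t)-\frac{\nu\lambda_1}{4K}[\mathcal{M}](t)$, not the reverse. Your main route through Lemma \ref{lem-prob-esti-2} does not depend on it.
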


\begin{proof}
In view of \eqref{eqn-Prob-Est-varpi=3}, we have 
\begin{align}
	& \Pbb\left\{\sup_{t \geq T} \left( \|\u(t)\|^{2}_{2} + \frac{\beta}{4} \int_{0}^{t}\|\Arm(\u(s))\|^4_{4} \drm s - \|\u_0\|^{2}_{2}  - \left[\|\Grm\|^2_{\mathcal{L}_{\Qrm}}  + \left(\frac{1}{4\beta}  +  \frac{27\alpha^4 }{4\beta^3}\right) |\Ocal| 
	+  \|\f\|^2_{\Vbb^{*}}\right] t  \right) \geq R \right\}
\nonumber\\ 
& 	\leq 
		e^{-  \frac{\nu\lambda_1}{4K} R}, 
\end{align}
for all $T\geq 0$ and $R>0$, which implies
\begin{align}
	& \Pbb\left\{ \limsup_{t \to +\infty} \frac{1}{t} \frac{27[\mathrm{N}_d]^4}{16\lambda_1\nu^3\varepsilon_0^3} \int_{0}^{t}\|\Arm(\u(s))\|^4_{4} \drm s  \leq   \frac{27[\mathrm{N}_d]^4}{4\beta\lambda_1\nu^3\varepsilon_0^3} \left[\|\Grm\|^2_{\mathcal{L}_{\Qrm}}  + \left(\frac{1}{4\beta}  +  \frac{27\alpha^4 }{4\beta^3}\right) |\Ocal| 
	+  \|\f\|^2_{\Vbb^{*}}\right]  \right\} = 1.
\end{align}
Therefore, for any initial velocity $\u_0\in \Hbb$, 
\begin{align}\label{limit-t}
	\limsup_{t \to +\infty} \frac{1}{t} \frac{27[\mathrm{N}_d]^4}{16\lambda_1\nu^3\varepsilon_0^3} \int_{0}^{t}\|\Arm(\u(s))\|^4_{4} \drm s  \leq   \frac{27[\mathrm{N}_d]^4}{4\beta\lambda_1\nu^3\varepsilon_0^3} \left[\|\Grm\|^2_{\mathcal{L}_{\Qrm}}  + \left(\frac{1}{4\beta}  +  \frac{27\alpha^4 }{4\beta^3}\right) |\Ocal| 
	+  \|\f\|^2_{\Vbb^{*}}\right], \;\;\; \Pbb\text{-a.s.}
\end{align}
From  \eqref{cond_mu} there exists a constant $\kappa^*>0$ such that
\[
 \kappa-  \frac{27[\mathrm{N}_d]^4}{4\beta\lambda_1\nu^3\varepsilon_0^3} \left[\|\Grm\|^2_{\mathcal{L}_{\Qrm}}  + \left(\frac{1}{4\beta}  +  \frac{27\alpha^4 }{4\beta^3}\right) |\Ocal| 
 +  \|\f\|^2_{\Vbb^{*}}\right]  - \kappa^* >0.
\]
This gives along with \eqref{limit-t}
\begin{align}
	& \lim_{t \to +\infty}\exp \left(-\kappa t + \frac{27[\mathrm{N}_d]^4}{16\lambda_1\nu^3\varepsilon_0^3} \int_{0}^{t}\|\Arm(\u(s))\|^4_{4} \drm s   \right)
	\nonumber\\
	&  \leq
	\lim_{t \to +\infty}\exp \bigg( - \kappa^* t - \frac{27[\mathrm{N}_d]^4}{4\beta\lambda_1\nu^3\varepsilon_0^3} \left[\|\Grm\|^2_{\mathcal{L}_{\Qrm}}  + \left(\frac{1}{4\beta}  +  \frac{27\alpha^4 }{4\beta^3}\right) |\Ocal| 
	+  \|\f\|^2_{\Vbb^{*}}\right]  t 
	\nonumber\\ 
	& \qquad \qquad \qquad + \frac{27[\mathrm{N}_d]^4}{16\lambda_1\nu^3\varepsilon_0^3} \int_{0}^{t}\|\Arm(\u(s))\|^4_{4} \drm s   \bigg)  =0,
\end{align}
and by \eqref{error-u-U} we also have that $\u-\Urm$ vanishes for large times. 
Particularly,  we see that there exists a time $t_0>0$ such that for any $t>t_0$, we have
\[
\|\u(t)-\Urm(t)\|_2^2 \leq 
\|\u_0-\Urm_0\|^2_2
  \exp\left(-\left(\kappa- \frac{27[\mathrm{N}_d]^4}{4\beta\lambda_1\nu^3\varepsilon_0^3} \left[\|\Grm\|^2_{\mathcal{L}_{\Qrm}}  + \left(\frac{1}{4\beta}  +  \frac{27\alpha^4 }{4\beta^3}\right) |\Ocal| 
  +  \|\f\|^2_{\Vbb^{*}}\right]  - \kappa^* \right)t\right),
\]
showing the exponential convergence rate. This completes the proof.
\end{proof}

%
%\medskip\noindent
%{\bf Acknowledgments:}     This work is funded by national funds through the FCT - Fundação para a Ciência e a Tecnologia, I.P., under the scope of the projects UID/297/2025 and UID/PRR/297/2025 (Center for Mathematics and Applications - NOVA Math).  K. Kinra wish to thank Prof. Fernanda Cipriano for suggesting this problem.

\medskip\noindent
\textbf{Data availability:} No data was used for the research described in the article.

\medskip\noindent
\textbf{Declarations}: During the preparation of this work, the authors have not used AI tools.

\medskip\noindent
\textbf{Author Contributions}: The sole author wrote, and edited the entire manuscript.

\medskip\noindent
\textbf{Conflict of interest:} The author declares no conflict of interest.

\appendix

%\printbibliography


\begin{thebibliography}{99}



\bibitem{azouani2014continuous} A. Azouani, E. Olson and E.S. Titi, Continuous data assimilation using general interpolant observables, {\em Journal of Nonlinear Science}, \textbf{24} (2014), 277-304.

\bibitem{AbderrahimTiti2014} A. Azouani and E.S. Titi, Feedback control of nonlinear dissipative systems by finite
  determining parameters - a reaction-diffusion paradigm, {\em Evolution Equations and Control Theory}, \textbf{3}(4) (2014), 579-594.



\bibitem{BB_RMS_2025}  A. Balakrishna and A. Biswas, Determining functionals and data assimilation and a novel regularity criterion for the three-dimensional Navier-Stokes equations, \emph{Res. Math. Sci.}, {\bf 12}(3) (2025), Paper No. 46, 29 pp.  %; MR4928012



\bibitem{BFLZ_2025_Arxiv}  H. Bessaih, B. Ferrario, O. Landoulsi and M. Zanella, Continuous data assimilation for 2D stochastic Navier-Stokes equations. \url{https://arxiv.org/pdf/2512.15184v1}


\bibitem{bessaih2022continuous}
H. Bessaih, V. Ginting and B. McCaskill, Continuous data assimilation for displacement in a porous medium, {\em Numerische Mathematik}, \textbf{151}(4) (2022), 927-962.




\bibitem{bessaih2015continuous}
H. Bessaih, E. Olson and E.S. Titi, Continuous data assimilation with stochastically noisy data, {\em Nonlinearity}, \textbf{28}(3) (2015).

%\bibitem{BHLP} A. Biswas, J. Hudson, A. Larios and Y. Pei, Continuous data assimilation for the 2{D} magnetohydrodynamic equations using one component of the velocity and magnetic fields, {\em Asymptot. Anal.}, \textbf{108}(1-2) (2018), 1-43.



\bibitem{BP_SIMA_2021}  A. Biswas and R. Price, Continuous data assimilation for the three-dimensional Navier-Stokes equations, \emph{SIAM J. Math. Anal.}, {\bf 53}(6) (2021), 6697-6723. %; MR4344886


\bibitem{Blomker} D. Bl\"omker, K. Law, A.M. Stuart, and K.C. Zygalakis, Accuracy and stability of the continuous-time 3DVAR filter for the Navier--Stokes equation, {\em Nonlinearity}, \textbf{26}(8) (2013), 2193-2219.
 

\bibitem{CGJA} Y. Cao, A. Giorgini, M. Jolly and A. Pakzad, Continuous data assimilation for the 3{D} {L}adyzhenskaya model: analysis and computations, {\em Nonlinear Anal. Real World Appl.}, \textbf{68} (2022), Paper No. 103659.
  
  
  
  \bibitem{foias1991determining} F. Ciprian and E.S. Titi, Determining nodes, finite difference schemes and inertial manifolds, {\em Nonlinearity}, \textbf{4}(1) (1991).
  

\bibitem{daley1993atmospheric} R. Daley, {\em Atmospheric data analysis}, Cambridge University Press, 1993.



	\bibitem{DaZ}	G. Da Prato and J. Zabczyk, \emph{Stochastic Equations in Infinite Dimensions}, Cambridge University Press, 2014.

 
 \bibitem{DiFratta+Solombrino_Arxiv} G. Di Fratta and F. Solombrino, Korn and Poincar\'e-Korn inequalities: a different perspective, \textit{Proc. Amer. Math. Soc.} {\bf 153}(1) (2025), 143--159.%; MR4840265

%\bibitem{DSZ_2025_Arxiv} A. Di Primio, L. Scarpa, and M. Zanella, Existence, uniqueness and asymptotic stability of invariant measures for the stochastic Allen-Cahn-Navier-Stokes system with singular potential, \url{https://arxiv.org/pdf/2501.06174}.





\bibitem{FGHMMW} A.~Farhat, N.~E. Glatt-Holtz, V.~R. Martinez, S.~A. McQuarrie, and J.~P.
  Whitehead, Data assimilation in large {P}randtl {R}ayleigh-{B}\'{e}nard
  convection from thermal measurements, {\em SIAM J. Appl. Dyn. Syst.}, \textbf{19}(1) (2020), 510-540.





%\bibitem{RFHK} R. Farwig,  H. Kozono and  H. Sohr, An $L^q$-approach to Stokes and Navier-Stokes equations in general domains, \emph{Acta Math.} {\bf 195} (2005), 21-53. 





\bibitem{Farwig+Kozono+Sohr_2007}  R. Farwig, H. Kozono and H. Sohr, On the Helmholtz decomposition in general unbounded domains, \textit{Arch. Math. (Basel)}, {\bf 88}(3) (2007), 239-248.  %; MR2305602




 


\bibitem{BZ_DCDS} B. Ferrario and M. Zanella, Uniqueness of the invariant measure and asymptotic stability for the 2D Navier-Stokes equations with multiplicative noise, \emph{Discrete Contin. Dyn. Syst.}, {\bf 44}(1) (2024), 228-262. % ; MR4671522


\bibitem{FZ25} B. Ferrario and M. Zanella, Long time behavior of the stochastic 2{D} {N}avier-{S}tokes
  equations, {\em Commun. Math. Anal. Appl.}, \textbf{4}(4) (2025), 550-576.

 
 
 \bibitem{FR80} R.L. Fosdick and , K.R. Rajagopal, Thermodynamics and stability of fluids of third grade, \textit{Proc. Roy. Soc. London Ser. A},  \textbf{339} (1980), 351-377.
 
 
 
% \bibitem{DFHM} D. Fujiwara, H. Morimoto, An $L^r$-theorem of the Helmholtz decomposition of vector fields, \emph{J. Fac. Sci. Univ. Tokyo Sect. IA Math.,} {\bf 24}(1977),	685-700.
 
 
 
 
% \bibitem{GKM_AMOP} S. Gautam, K. Kinra and M.T. Mohan, Feedback stabilization of convective Brinkman-Forchheimer extended Darcy equations, \emph{Appl. Math. Optim.}, \textbf{91}(1) (2025), Paper No. 25.
 
 
% \bibitem{GM_2025} S. Gautam and M.T. Mohan, 	On convective Brinkman-Forchheimer equations, \emph{Dyn. Partial Differ. Equ.}, \textbf{22}(3) (2025), pp. 191-233.
% 
 
 
 
 
 
 
 
 
 
 \bibitem{Glatt-Holtz_2014_Arxiv}  N. Glatt-Holtz, Notes on statistically invariant states in stochastically driven fluid flows, \url{https://arxiv.org/pdf/1410.8622}.
 
 
 

\bibitem{Hamza+Paicu_2007} M. Hamza and M. Paicu, {Global existence and uniqueness result of a class of third-grade fluids equations,} \textit{Nonlinearity}, \textbf{20}(5)	(2007), 1095-1114.


 

%\bibitem{GHMR17} N.~Glatt-Holtz, J.C.~Mattingly and G.~Richards, On unique ergodicity in nonlinear stochastic partial differential equations, {\em Journal of Statistical Physics}, \textbf{166} (3-4), (2017), 618-649. 






\bibitem{JST} M.S. Jolly, T. Sadigov and E.S. Titi, Determining form and data assimilation algorithm for weakly damped
  and driven {K}orteweg-de {V}ries equation-{F}ourier modes case, {\em Nonlinear Anal. Real World Appl.}, \textbf{36} (2017), 287-317.

\bibitem{jones1992determining} D.A. Jones and E.S. Titi, Determining finite volume elements for the 2d Navier-Stokes
  equations, {\em Physica D: Nonlinear Phenomena}, \textbf{60}(1-4) (1992), 165-174.

\bibitem{jones1993upper} D.A Jones and E.S. Titi, Upper bounds on the number of determining modes, nodes, and volume elements for the Navier-Stokes equations, {\em Indiana University Mathematics Journal}, \textbf{42}(3) (1993), 875-887.




\bibitem{Kesavan_1989}  S. Kesavan, \textit{Topics in functional analysis and applications},
John Wiley \& Sons, Inc., New York,  1989.





\bibitem{KK_CDA_SCBF} K. Kinra, A note on continuous data assimilation for stochastic convective Brinkman-Forchheimer equations in 2D and 3D, (2026). \url{https://arxiv.org/pdf/2601.17650}


%\bibitem{KKMTM-DCDSB} {K. Kinra} and M.T. Mohan, Random attractors and invariant measures for stochastic convective Brinkman-Forchheimer equations on 2D and 3D unbounded domains, \emph{Discrete Contin. Dyn. Syst. Ser. B}, \textbf{29}(1) (2024), 377-425. 

%\bibitem{KK+MTM-SCBF} K. Kinra and M.T. Mohan, Stochastic convective Brinkman-Forchheimer equations on general unbounded domains, \url{https://arxiv.org/pdf/2007.09376}.
%
%\bibitem{KK+FC+MTM-SCBF} K. Kinra, F. Cipriano and M.T. Mohan, Martingale solution, invariant measure and ergodicity for stochastic convective Brinkman-Forchheimer equations on general domains in $\mathbb{R}^d$, \url{https://arxiv.org/pdf/2109.05510v2}.
%



%\bibitem{HKTY}   H. Kozono and T. Yanagisawa, $L^r$-variational inequality for vector fields and the Helmholtz-Weyl decomposition in bounded domains, \emph{Indiana Univ. Math. J.}, {\bf  58}(4) (2009), 1853-1920.


%\bibitem{Krylov_2022}  Nikolai Krylov, \emph{Introduction to the theory of random processes}, \textbf{Vol. 43}, American Mathematical Soc., 2002.


%\bibitem{KS} A.~Kulik and M.~Scheutzow, Generalized couplings and convergence of transition probabilities, {\em Probability Theory and Related Fields}, \textbf{171}(1-2) (2018), 333-376.

\bibitem{Kunstmann_2010}  P.C. Kunstmann, Navier-Stokes equations on unbounded domains with rough initial data, {\it Czechoslovak Math. J.}, {\bf 60(135)}(2) (2010), 297-313. %; MR2657950



%\bibitem{LP} A. Larios and Y. Pei, Approximate continuous data assimilation of the 2{D} {N}avier-{S}tokes equations via the {V}oigt-regularization with observable data, {\em Evol. Equ. Control Theory}, \textbf{9}(3) (2020), 733-751.

 



\bibitem{PAM}	P.A. Markowich, E.S. Titi and S. Trabelsi,	Continuous data assimilation for the three-dimensional Brinkman-Forchheimer-extended Darcy model, \emph{Nonlinearity}, {\bf 29}(4) (2016), 1292-1328.









%\bibitem{MTM9} 	M. T. Mohan,   $\mathbb{L}^p$-solutions of deterministic and stochastic convective Brinkman-Forchheimer equations, \emph{Analysis and Mathematical Physics}, {\bf 11} (2021), Ar. No.: 164. 
%
%
%
%
%
%\bibitem{MTM6} M. T. Mohan,  Well-posedness and asymptotic behavior of stochastic convective Brinkman-Forchheimer equations perturbed by pure jump noise, \emph{Stoch PDE: Anal Comp}, {\bf 10}(2) (2022), 614-690.%; MR4439993. 


\bibitem{NKC_2025_Submit}  R. Nouira, K. Kinra and F. Cipriano, Continuous data assimilation for a class of non-Newtonian fluids of differential type in 2D and 3D, \emph{J. Math. Anal. Appl.}, (2026), In press.


	\bibitem{PP19} 	M. Parida and  S. Padhy, Electro-osmotic flow of a third-grade fluid past a channel having stretching walls, \textit{Nonlinear Eng.}, \textbf{8}(1) (2019), 56-64.

\bibitem{RHK18}	 G.J. Reddy, A. Hiremath, M. Kumar, Computational modeling of unsteady third-grade fluid flow over a vertical cylinder: A study of heat transfer visualization, \textit{Results Phys.}, \textbf{8}  (2018), 671-682.



\bibitem{Temam_1984} R. Temam, \emph{Navier-Stokes Equations, Theory and Numerical Analysis}, North-Holland, Amsterdam, 1984.




\bibitem{yas-fer_JNS}  Y. Tahraoui and F. Cipriano, Invariant measures for a class of stochastic third-grade fluid equations in 2D and 3D bounded domains, \emph{J. Nonlinear Sci.}, \textbf{34}(6) (2024), Paper No. 107, 42 pp.%; MR4799052


%\bibitem{TV_DCDS-S_2025}  E.S. Titi, C. Victor, On the inadequacy of nudging data assimilation algorithms for non-dissipative systems. \emph{Discrete Contin. Dyn. Syst. Ser. S}, (2025). \url{10.3934/dcdss.2025151}
%
%
%
%
%
%\bibitem{Zhou_2012} Y. Zhou, Regularity and uniqueness for the 3D incompressible Navier–Stokes equations with damping, \emph{Applied Mathematics Letters}, \textbf{25} (2012), 1822-1825.


\end{thebibliography}
\end{document}